\newcommand{\Det}{\operatorname{Det}}
\newcommand{\Tr}{\operatorname{trace}}
\newcommand{\trace}{\operatorname{trace}}
\newcommand{\I}{\operatorname{I}}
\newcommand{\R}{{\mathbb R}}
\newcommand{\C}{{\mathbb C}}
\newcommand{\E}{{\mathbb E}}
\newcommand{\PP}{{\mathbb P}}
\newcommand{\X}{{\mathcal X}}
\newcommand{\N}{{\mathbb N}}
\newcommand{\F}{{\mathcal F}}
\renewcommand{\d}{{\text{d}}}
\theoremstyle{remark}
\newtheorem*{rem}{Remark}
\theoremstyle{definition}
\theoremstyle{plain}
\newtheorem{thm}{Theorem}[section]
\newtheorem{prop}{Proposition}[section]
\theoremstyle{definition}
\newtheorem{hyp}{Hypothesis}
\newtheorem{defin}{Definition}
\title{Efficient simulation of the Ginibre point process}
\author{L. Decreusefond \and I. Flint \and A. Vergne}
\address{Institut Telecom, Telecom Paristech, CNRS LTCI, Paris, France}
\email{\{laurent.decreusefond,ian.flint,avergne\} @telecom-paristech.fr}
\keywords{Determinantal point process, Ginibre point process, simulation}
\date{}
\begin{document}

\begin{abstract}
The Ginibre point process is one of the main examples of determinantal point processes on the complex plane. It forms a recurring model in stochastic matrix theory as well as in practical applications. Since its introduction in random matrix theory, the Ginibre point process has also been used to model random phenomena where repulsion is observed. In this paper, we modify the classical Ginibre point process in order to obtain a determinantal point process more suited for simulation. We also compare three different methods of simulation and discuss the most efficient one depending on the application at hand.
\end{abstract}

\maketitle

\noindent \emph{Mathematics Subject Classification}: 
60G55, 65C20.

\section{Introduction}

Determinantal point processes form a class of point processes which exhibit repulsion, and model a wide variety of phenomena. After their introduction by Macchi in \cite{Mb}, they have been studied in depth from a probabilistic point of view in \cite{STa, Sa} wherein we find and overview of their mathematical properties. Other than modeling fermion particles (see the account of the determinantal structure of fermions in \cite{TIa}, and also \cite{Sa} for other examples), they are known to appear in many branches of stochastic matrix theory (see \cite{Sa} or the thorough overview of \cite{AGZa} for example) and in the study of the zeros of Gaussian analytic functions (see \cite{HKPVb}). The Ginibre point process in particular was first introduced in \cite{Ga} and arises in many problems regarding determinantal point processes. To be more specific, the eigenvalues of a hermitian matrix with (renormalized) complex Gaussian entries (which is a subclass of the so-called Gaussian Unitary Ensemble) are known to form a Ginibre point process. Moreover, the Ginibre point process is the natural extension of the Dyson point process to the complex plane. As such, and as explained in \cite{Ga}, it models the positions of charges of a two-dimensional Coulomb gas in a harmonic oscillator potential, at a temperature corresponding to $\beta=2$. It should be noted that the Dyson model is a determinantal point process on $\R$ which is of central importance, as it appears as the bulk-scaling limit of a large class of determinantal point processes, c.f. \cite{Ba}.

Simulation of general determinantal point processes is mostly unexplored, and was in fact initiated in \cite{HKPVa} wherein the authors give a practical algorithm for the simulation of determinantal point processes. Theoretical discussion of the aforementioned algorithm as well as statistical aspects have also been explored in \cite{LMa}. More specifically, the Ginibre point process has spiked interest since its introduction in \cite{Ga}. The simulation procedure which is hinted in \cite{Ga} was fully developed in \cite{LHa}. To the best of our knowledge, the first use of the Ginibre point process as a model traces back to \cite{LDa}. More recently, in \cite{MSa,TLa,DFMVa}, different authors have used the Ginibre point process to model phenomena arising in networking. Indeed, this particular model has many advantages with regards to applications. It is indeed invariant with respect to rotations and translations, which gives us a natural compact subset on which to simulate it: the ball centered at the origin. Moreover, the electrostatic repulsion between particles seems to be fitting for many applications. Our aim in this paper is to study the simulation of the Ginibre point process from a practical point of view, and give different methods which will be more or less suited to the application at hand. The main problem that arises in practice is that although the eigenvalues of matrices in the GUE ensemble form a Ginibre point process, these eigenvalues are \emph{not} compactly supported, although after renormalization, they tend to be compactly supported as $N$ tends to infinity (this is known as the circular law in stochastic matrix theory). Moreover, as will be seen here, truncating to a natural compact and letting $N$ tend to infinity is not the most efficient way to proceed, even though this operation preserves the determinantal property of the point process. Therefore, our methods will rely on the modification of the kernel associated with the Ginibre point process. We study in depth the projection of the kernel onto a compact, its truncation to a finite rank, and in the last part a combination of both operations. Each of these operations on the kernel will have different results on the resulting point process, as well as the simulation techniques involved.

We proceed as follows. We start in Section~\ref{sec:notations} by a general definition of a point process, as well as determinantal point process. We then recall the algorithm from \cite{HKPVa} as well as some more advanced results from \cite{LMa} in Section~\ref{sec:algo}. In Section~\ref{sec:ginibre}, we present more specifically the Ginibre point process, and prove some probabilistic properties. We discuss the truncation, and the projection of the Ginibre kernel and gives the basic ideas that will yield different simulation techniques.

\section{Notations and general results}
\label{sec:notations}

\subsection{Point processes}

Let $E$ be a Polish space, $\mathcal{O}(E)$ the family of all non-empty open subsets of $E$ and $\mathcal{B}$ denotes the corresponding Borel $\sigma$-algebra.  We also consider $\lambda$ a Radon measure on $(E,\mathcal{B})$. Let $\X$ be the space of locally finite subsets in $E$, sometimes called the configuration space:
\begin{equation*}
	\X = \{ \xi \subset E \,: \, | \Lambda \cap \xi | < \infty \, \text{ for any compact set } \Lambda \subset E    \}.
\end{equation*}
In fact, $\X$ consists of all simple positive integer-valued Radon measures (by simple we mean that for all $x \in E$, $\xi({x}) \le 1$). Hence, it is naturally topologized by the vague topology, which is the weakest topology such that for all continuous and compactly supported functions $f$ on $E$, the mapping
\begin{equation*}
	\xi \mapsto \langle f, \xi \rangle := \sum_{y \in \xi} f(y)
\end{equation*}
is continuous. We denote by $\F$ the corresponding $\sigma$-algebra. We call elements of $\X$ configurations and identify a locally finite configuration $\xi$ with the atomic Radon measure $\sum_{y \in \xi} \varepsilon_y$, where we have written $\varepsilon_y$ for the Dirac measure at $y \in E$. 
\\

Next, let $\X_0 = \{ \xi \in \X \, : \, | \xi | < \infty \}$ be the space of all finite configurations on $E$. $\X_0$ is naturally equipped with the trace $\sigma$-algebra $\F_0 = \F |_{\X_0}$.  A random point process is defined as a probability measure $\mu$ on $(\X, \F)$. A random point process $\mu$ is characterized by its Laplace transform $\mathrm{L}_\mu$, which is defined for any measurable nonnegative function $f$ on $E$ as
\begin{equation*}
	{\rm L}_\mu(f) = \int_\X e^{- \sum_{x \in \xi} f(x)} \,\mu(\d \xi)	.
\end{equation*} 
For the precise study of point processes, we also introduce the $\lambda$-sample measure, as well as subsequent tools. Most of our notations are inspired from the ones in \cite{GYa}.
\begin{defin}
	The $\lambda$-sample measure $L$ on $(\X_0, \F_0)$ is defined by the identity
	\begin{equation*}
	\int f(\alpha) \, L(\d \alpha) = \sum_{n \ge 0} \frac{1}{n!} \int_{E^n} f(\{ x_1, \dots, x_n \}) \,\lambda(\d x_1) \dots  \lambda(\d x_n),
	\end{equation*}
	for any measurable nonnegative function $f$ on $\X_0$.
\end{defin}
\noindent
Point processes are often characterized via their correlation function, defined as below.
\begin{defin}[Correlation function]
	A point process $\mu$ is said to have a correlation function $\rho : \X_0 \rightarrow \R$ if $\rho$ is measurable and 
	\begin{equation*}
		\int_\X \sum_{\alpha \subset \xi, \ \alpha \in \X_0} f ( \alpha) \,\mu(\d \xi) = \int_{\X_0} f ( \alpha )\, \rho(\alpha) \, L(\d \alpha) ,
	\end{equation*}
	for all measurable nonnegative functions $f$ on $\X_0$. For $\xi = \{ x_1, \dots, x_n \}$, we will sometimes write $\rho(\xi) = \rho_n(x_1,\dots,x_n)$ and call $\rho_n$ the $n$-th correlation function, where here $\rho_n$ is a symmetrical function on $E^n$. 
\end{defin}
\noindent
It can be noted that correlation functions can also be defined by the following property, both characterizations being equivalent in the case of simple point processes.
\begin{prop}
	A point process $\mu$ is said to have correlation functions $(\rho_n)_{n \in \N}$ if for any $A_1,\dots,A_n$ disjoint bounded Borel subsets of $E$,
	\begin{equation*}
		\E[\prod_{i=1}^n \xi(A_i)] = \int_{A_1\times \dots \times A_n} \rho_n(x_1,\dots,x_n) \,\lambda( \d x_1)\dots \lambda(\d x_n)	.
	\end{equation*}
\end{prop}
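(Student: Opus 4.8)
The plan is to recognise that the identity in the Proposition is nothing but the defining identity of the correlation function, restricted to a convenient family of test functions, and then to show that this family is rich enough to recover the general identity; this last step is where the simplicity of $\mu$ enters.

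First I would specialise the defining identity of the correlation function to test functions supported on configurations of a fixed cardinality $n$. Writing $f(\alpha) = \1{|\alpha| = n}\, h(\alpha)$ for a symmetric nonnegative $h$, the left-hand side becomes the expectation of a sum over the $n$-point subsets of $\xi$, while the definition of the $\lambda$-sample measure $L$ turns the right-hand side into $\frac{1}{n!}\int_{E^n} h\, \rho_n \, \d\lambda^{\otimes n}$. Passing from unordered $n$-subsets to ordered $n$-tuples of pairwise distinct points (a factor $n!$ on each side) would show that the Definition is equivalent to
\[
\E\Big[ \sum_{\substack{x_1,\dots,x_n \in \xi \\ \text{pairwise distinct}}} h(x_1,\dots,x_n) \Big] = \int_{E^n} h(x_1,\dots,x_n)\, \rho_n(x_1,\dots,x_n)\, \lambda(\d x_1)\dots\lambda(\d x_n)
\]
for every $n$ and every nonnegative measurable $h$ (both sides depend on $h$ only through its symmetrisation, so symmetry may be dropped). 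In other words, $\rho_n$ is the density with respect to $\lambda^{\otimes n}$ of the $n$-th factorial moment measure of $\mu$.

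The forward implication is then immediate: taking $h = \1{A_1} \otimes \dots \otimes \1{A_n}$ for pairwise disjoint bounded Borel sets, disjointness makes the distinctness constraint automatic, so the constrained sum collapses to the ordinary product $\prod_{i=1}^n \xi(A_i)$, and the displayed identity becomes exactly the one stated in the Proposition.

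For the converse I would start from the Proposition's identity on disjoint rectangles and upgrade it to the displayed identity for all $h$. The hard part, and the place where simplicity is needed, is the passage across the diagonal $\{x_i = x_j\}$: I would fix a bounded region, partition it into small disjoint cells, and expand $\prod_i \xi(B_i)$ over the cells. The terms indexed by pairwise distinct cells involve disjoint sets, hence are governed by the Proposition's hypothesis and reassemble, as the mesh tends to $0$, into the off-diagonal integral of $\rho_n$; the remaining terms, in which two distinct points fall in a common cell, must be shown to vanish in the limit, and this is precisely where I would invoke simplicity --- the almost surely distinct points of a simple configuration are separated by any sufficiently fine partition, so these contributions disappear as the mesh shrinks. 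This would identify the factorial moment measure with $\rho_n\, \d\lambda^{\otimes n}$ on all rectangles, hence on all of $\B(E)^{\otimes n}$ by Dynkin's lemma, and finally on all nonnegative measurable $h$ by linearity and monotone convergence, giving the Definition back and closing the equivalence.
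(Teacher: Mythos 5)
The paper states this proposition without proof, presenting it as the standard alternative characterization of correlation functions (via factorial moment measures), so there is no in-paper argument to compare against. Your proposal is the classical route (essentially Lenard's equivalence, as in Daley--Vere-Jones): reduce the defining identity to the statement that $\rho_n$ is the density of the $n$-th factorial moment measure, observe that disjointness of the $A_i$ makes the distinctness constraint vacuous so the forward direction is immediate, and recover the converse by refining partitions and using simplicity to kill the diagonal contributions before applying a monotone-class argument. This is correct in outline. The only points worth flagging are routine: (i) the extension from disjoint rectangles to all rectangles needs both measures to assign zero mass to the diagonals $\{x_i = x_j\}$ --- simplicity handles the factorial moment measure, but for $\rho_n\,\lambda^{\otimes n}$ one implicitly uses that $\lambda$ is diffuse (true for the Lebesgue measure used throughout the paper); and (ii) the uniqueness step of Dynkin's lemma requires local finiteness of the factorial moment measures, which holds here since the correlation functions of a determinantal process are locally integrable. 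Neither issue affects the substance of your argument.
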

\noindent
Recall that $\rho_1$ is the particle density with respect to $\lambda$, and 
\begin{equation*}
	\rho_n(x_1,\dots,x_n) \, \lambda(\d x_1)\dots \lambda(\d x_n)
\end{equation*}
is the probability of finding a particle in the vicinity of each $x_i$, $i=1,\dots,n$.
We also need to define the Janossy density of $\mu$, which is defined as follows:
\begin{defin}
For any compact subset $\Lambda \subseteq E$, the Janossy density $j_\Lambda$ is defined (when it exists) as the density function of $\mu_\Lambda$ with respect to $L_\Lambda$.
\end{defin}
\noindent
In the following, we will write $j_\Lambda^n (x_1,\dots,x_n) = j_\Lambda(\{x_1,\dots,x_n\} ) $ for the $n$-th Janossy density, i.e. the associated symmetric function of $n$ variables, for a configuration of size $n \in \N$. The Janossy density $j_\Lambda(x_1,\dots,x_n)$ is in fact the joint density  (multiplied by a constant)  of the $n$ points given that the point process has exactly $n$ points. Indeed, by definition of the Janossy intensities, the following relation is satisfied, for any measurable $f : \X_0 \rightarrow \R$,
\begin{equation*}
	E[ f(\xi) ] =  \sum_{n\ge 0} \frac{1}{n!} \int_{\Lambda^n} f(\{x_1,\dots,x_n\}) j_\Lambda(\{x_1,\dots,x_n\})\,\lambda(\d x_1)\dots \lambda(\d x_n).
\end{equation*}

\subsection{Determinantal processes}

For details on this part, we refer to \cite{STa,Sa}. For
any compact subset $\Lambda \subset E$, we denote by $L^2(\Lambda,\,
\lambda)$ the set of functions square integrable with respect to the
restriction of the measure $\lambda$ to the set $\Lambda$. This
becomes a Hilbert space when equipped with the usual norm:
\begin{equation*}
  \|f\|^2_{L^2(\Lambda,\lambda)}=\int_{\Lambda}|f(x)|^2 \d\lambda(x).
\end{equation*}
For $\Lambda$ a compact subset of $E$, $P_{\Lambda}$ is the projection
from $L^2(E,\lambda)$ onto $L^2(\Lambda,\lambda)$, i.e., $P_\Lambda f=1_\Lambda.$
The operators we deal with are special cases of the general
set of continuous maps from $L^2(E,\, \lambda)$ into itself.
\begin{defin}
  A map $T$ from $L^2(E,\lambda)$ into itself is said to be an integral
  operator whenever there exists a measurable function, which we still
  denote by $T$, such that
  \begin{equation*}
    Tf(x)=\int_E T(x,\, y) f(y)\d \lambda(y).
  \end{equation*}
  The function $T:E \times E \rightarrow \R$ is called the kernel of $T$.
\end{defin}
\begin{defin}
  Let $T$ be a bounded map from $L^2(E,\, \lambda)$ into itself. The
  map $T$ is said to be trace-class whenever for a complete
  orthonormal basis  $(h_n,\, n\ge 1)$ of $L^2(E,\,
  \lambda)$,
  \begin{equation*}
    \| T \|_1 := \sum_{n\ge 1} (|T | h_n,\, h_n)_{L^2} < \infty,
  \end{equation*}
where $|T| :=  \sqrt{T T^*}$. Then, the trace of $T$ is defined by
  \begin{equation*}
   \Tr (T)= \sum_{n\ge 1} (Th_n,\, h_n)_{L^2}.
  \end{equation*}
\end{defin}
\noindent
It is easily shown that the notion of trace does not depend on the
choice of the complete
  orthonormal basis. Note that if $T$ is trace-class then $T^n$ also is
trace-class for any $n\ge 2$, since we have that $\| T^n \|_1 \le \| T \|^{n-1} \| T \|_1$ (see e.g. \cite{DSa}).
\begin{defin}
  \label{def:fredholm_determinant}
  Let $T$ be a trace-class operator. The Fredholm determinant of
  $(\I+T)$ is defined by:
  \begin{equation*}
    \Det(\I+T)=\exp\left(\sum_{n=1}^{+\infty}\frac{(-1)^{n-1}}{n}\Tr (T^n)\right),
  \end{equation*}
  where $\I$ stands for the identity operator on $L^2(E,\lambda)$.
\end{defin}

\noindent
The Fredholm determinant can also be expanded as a function of the usual determinant, as can be observed in the following proposition, which can be obtained easily by expanding the exponential in the previous definition (see
\cite{STa}):
\begin{prop}
  \label{thm:developpement_det_alpha}
  For a trace-class integral operator $T$, we have:
  \begin{equation*}
    \Det (\I -T)=
    \sum_{n=0}^{+\infty}\frac{1}{n!}\int_{\Lambda^n}{\det\, (T(x_i,\, x_j))_{1\le i,j\le n}}\d\lambda(x_1)\ldots\d\lambda(x_n).
  \end{equation*}
\end{prop}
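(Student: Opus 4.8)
The plan is to start from Definition~\ref{def:fredholm_determinant} and expand the exponential, converting the resulting polynomial expressions in the traces $\Tr(T^n)$ into the integrals of usual determinants on the right-hand side. Writing $\I-T=\I+(-T)$ and substituting $-T$ for $T$ in the definition, one first obtains
\begin{equation*}
  \Det(\I-T)=\exp\!\left(-\sum_{n\ge 1}\frac1n\,\Tr(T^n)\right),
\end{equation*}
so the whole problem is to identify the power-series expansion of this exponential with the claimed sum. The bridge between the two sides is the Leibniz formula $\det(T(x_i,x_j))_{1\le i,j\le n}=\sum_{\sigma\in S_n}\operatorname{sgn}(\sigma)\prod_{i=1}^n T(x_i,x_{\sigma(i)})$, together with the observation that integrating such a product reorganizes itself along the cycles of $\sigma$.

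First I would record the kernel form of the traces: iterating the composition rule for integral operators gives $\Tr(T^n)=\int_{\Lambda^n}T(x_1,x_2)T(x_2,x_3)\cdots T(x_n,x_1)\,\d\lambda(x_1)\cdots\d\lambda(x_n)$, i.e. the cyclic integral of the kernel. Next I would expand $\exp$ as the product $\prod_{j\ge1}\exp(-\tfrac1j\Tr(T^j))$ and then as a multi-sum over cycle-multiplicities $(c_j)_{j\ge1}$, grouping terms by $n=\sum_j jc_j$. On the other side, I would classify the permutations of $S_n$ by cycle type: a permutation with $c_j$ cycles of length $j$ contributes $\prod_j\Tr(T^j)^{c_j}$ to $\int_{\Lambda^n}\prod_i T(x_i,x_{\sigma(i)})\,\d\lambda$, carries signature $\prod_j((-1)^{j-1})^{c_j}$, and there are exactly $n!/\prod_j(j^{c_j}c_j!)$ such permutations. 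Matching the two multi-sums term by term, while carefully tracking the sign produced by the substitution $T\mapsto-T$ against the signature, yields the stated identity; this combinatorial core is precisely the classical exponential formula relating power sums to the cycle index of $S_n$.

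The genuine obstacle is analytic rather than combinatorial: one must justify that all the rearrangements (expanding the exponential, interchanging $\sum$ and $\int$, regrouping by cycle type) are legitimate, which requires absolute convergence of every series involved. For this I would invoke the trace-class bound $\|T^n\|_1\le\|T\|^{n-1}\|T\|_1$ recalled in the excerpt to control $\sum_n\tfrac1n|\Tr(T^n)|$, and a Hadamard-type inequality for $|\det(T(x_i,x_j))|$ to guarantee that $\sum_n\tfrac1{n!}\int_{\Lambda^n}|\det(T(x_i,x_j))|\,\d\lambda$ converges. A secondary technical point is ensuring the kernel is regular enough (for instance continuous, so that Mercer's theorem applies) for $\Tr(T^n)$ to coincide with the diagonal cyclic integral above rather than merely with the spectral sum $\sum_k\mu_k^n$. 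Once convergence and this identification are in hand, the term-by-term matching is routine.
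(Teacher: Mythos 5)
Your route is the one the paper itself intends: Proposition~\ref{thm:developpement_det_alpha} is justified there only by the remark that it can be obtained by expanding the exponential in Definition~\ref{def:fredholm_determinant}, with a pointer to \cite{STa}, and your plan --- write $\Tr(T^n)$ as the cyclic integral of the kernel, expand the exponential over cycle multiplicities, classify permutations by cycle type with the count $n!/\prod_j j^{c_j}c_j!$, and control all rearrangements via $\|T^n\|_1\le\|T\|^{n-1}\|T\|_1$ together with a Hadamard bound on $|\det(T(x_i,x_j))|$ --- is precisely the standard way of carrying that out. The analytic caveats you raise (absolute convergence, and the identification of $\Tr(T^n)$ with the diagonal cyclic integral for a sufficiently regular kernel) are the right ones.

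There is, however, one concrete point at which your writeup cannot be completed as promised: the signs do not close up to the identity as printed. A permutation with $c_j$ cycles of length $j$ has signature $(-1)^{n-\sum_j c_j}$, so your term-by-term matching gives
\begin{equation*}
\frac{1}{n!}\int_{\Lambda^n}\det\bigl(T(x_i,x_j)\bigr)\,\d\lambda(x_1)\ldots\d\lambda(x_n)
=(-1)^n\sum_{\sum_j jc_j=n}\ \prod_{j\ge1}\frac{1}{c_j!}\left(-\frac{\Tr(T^j)}{j}\right)^{c_j},
\end{equation*}
and summing over $n$ yields $\Det(\I-T)=\sum_{n\ge0}\frac{(-1)^n}{n!}\int_{\Lambda^n}\det(T(x_i,x_j))\,\d\lambda^{\otimes n}$, equivalently $\Det(\I+T)=\sum_{n\ge0}\frac{1}{n!}\int_{\Lambda^n}\det(T(x_i,x_j))\,\d\lambda^{\otimes n}$. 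A rank-one sanity check confirms this: for $T=\lambda\,\phi\,\overline{\phi}$ with $\|\phi\|_{L^2}=1$, the right-hand side of the Proposition equals $1+\lambda$ while $\Det(\I-T)=1-\lambda$. So the statement as printed carries a sign typo --- either the left side should read $\Det(\I+T)$, or the integrand should be $\det(-T(x_i,x_j))$, i.e.\ a factor $(-1)^n$ is missing --- and your assertion that ``carefully tracking the sign \ldots yields the stated identity'' must be replaced by the observation that it yields the corrected one. Everything else in your outline is sound and matches the intended (uncited, one-line) argument of the paper.
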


With the previous definitions in mind, we move onto the precise definition of determinantal point processes. To that effect, we will henceforth use the following set of hypotheses:
\begin{hyp}\label{hyp:condition_T} The map $T$ is an Hilbert-Schmidt
  operator from $L^2(E,\, \lambda)$ into $L^2(E,\, \lambda)$ which
  satisfies the following conditions:
  \begin{enumerate}[i)]
  \item \label{item:1} $T$ is a bounded symmetric integral operator on
    $L^2(E,\, \lambda)$, with kernel $T(.,.)$.
      \item \label{item:2} The spectrum of $T$ is included in $[ 0,\, 1]$.
  \item \label{item:3} The map $T$ is locally of trace-class, i.e.,
    for all compact subsets $\Lambda\subset E$, the restriction
    $T_{\Lambda}:=P_{\Lambda}T P_{\Lambda}$ of $T$ to $L^2(\Lambda,\lambda)$ is
    of trace-class. 
  \end{enumerate}
\end{hyp}
For a compact subset $\Lambda\subset
E$, the map $J[{\Lambda}]$ is defined by:
\begin{equation}
\label{eq:defj}
  J[\Lambda] =\left(\I-
    T_{\Lambda}\right)^{-1} T_{\Lambda},
\end{equation}
so that $T$ and $J[\Lambda]$ are quasi-inverses in the sense that
\begin{equation*}
  \left(\I-T_{\Lambda} \right) \left(\I+
    J[\Lambda]\right) =\I.
\end{equation*}
For any compact $\Lambda$, the operator $J[\Lambda]$ is also a
trace-class operator in $L^2(\Lambda,\, \lambda)$.  In the following
theorem, we define a general determinantal process with three equivalent
characterizations: in terms of their Laplace transforms, Janossy
densities or correlation functions. The theorem is also a theorem of
existence, a problem which is far from being trivial.
\begin{thm}[See \protect{\cite{STa}}]\label{thm:existence}
  Assume Hypothesis~\ref{hyp:condition_T} is satisfied. There exists a unique probability measure
  $\mu_{\,T,\,\lambda}$ on the configuration space $\X$ such
  that, for any nonnegative bounded measurable function $f$ on $E$
  with compact support, we have:
  \begin{equation*}
    L_{\mu_{\, T, \, \lambda}}(f) =\Det \left( \I - T
      [1-e^{-f}]\right),\label{eq:3}
  \end{equation*}
  where $T[1-e^{-f}]$ is the bounded operator on $L^2(E,\lambda)$ with kernel
  :
  \begin{equation*}
    (T[1-e^{-f}])(x,y)=  \sqrt{1-\exp(-f(x))} T(x,y)\, \sqrt{1-\exp(-f(y))}.
  \end{equation*}
  
  This means that for any integer $n$ and any $(x_1,\cdots,\, x_n) \in
  E^n,$ the correlation functions of $\mu_{\, T,\, \lambda}$
  are given by:
  \begin{equation*}
    \rho_{n,\, T}(x_1,\cdots,\,\, x_n)=\det \left( T\left( x_i,\, x_j\right)
    \right)_{1\le i,j\le n},
  \end{equation*}
  and for $n=0$, $\rho_{0,\,T}(\emptyset)=1$.  For any
  compact subset $\Lambda\subset E,$ the operator $J[\Lambda]$ is
  an Hilbert-Schmidt, trace-class operator, whose spectrum is included
  in $[0, +\infty[$.  For any $n \in \N$, any compact $\Lambda\subset
  E$, and any $(x_1,\cdots,\, x_n) \in \Lambda^n$ the $n$-th Janossy
  density is given by:
  \begin{equation}\label{eq:4}
    j_{\Lambda, T}^{n}\left( x_1,\, \cdots,\, x_n\right)
    =\Det\left( \I-
      T_{\Lambda}\right) \det \left(J[\Lambda] (x_i,\, x_j)\right) _{1\le i,j \le n}.
  \end{equation}
  For $n=0$, we have
  \begin{math}
    j_{\Lambda,T}^{0}\left( \emptyset\right)
    =\Det\left( I - T_{\Lambda}\right).
  \end{math}
\end{thm}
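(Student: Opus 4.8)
The plan is to reduce the global assertion to a construction on compact sets and then to glue the local processes together by a projective limit argument, the determinantal structure serving throughout to guarantee the positivity that a genuine point process requires.

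First I would fix a compact $\Lambda \subset E$ and work with the trace-class operator $T_\Lambda = P_\Lambda T P_\Lambda$, whose eigenvalues $(\lambda_k)$ lie in $[0,1]$ by Hypothesis~\ref{hyp:condition_T}. When $1$ is not among them, the quasi-inverse $J[\Lambda] = (\I - T_\Lambda)^{-1} T_\Lambda$ of \eqref{eq:defj} is a well-defined, positive semi-definite trace-class operator with eigenvalues $\lambda_k/(1-\lambda_k) \ge 0$, which already gives the asserted inclusion of its spectrum in $[0,+\infty[$. I would then read \eqref{eq:4} as the definition of a candidate finite point process on $\Lambda$ and verify that it is legitimate. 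Nonnegativity of $j_{\Lambda,T}^{n}$ follows from the positive-definiteness of the kernel of $J[\Lambda]$, which makes every minor $\det(J[\Lambda](x_i,x_j))_{1\le i,j\le n}$ nonnegative, together with $\Det(\I - T_\Lambda) = \prod_k (1-\lambda_k) \ge 0$. Normalisation is where Proposition~\ref{thm:developpement_det_alpha} enters: expanding $\Det(\I + J[\Lambda])$ as $\sum_{n\ge 0}\frac{1}{n!}\int_{\Lambda^n}\det(J[\Lambda](x_i,x_j))\,\lambda(\d x_1)\cdots\lambda(\d x_n)$ and using $\Det(\I - T_\Lambda)\,\Det(\I + J[\Lambda]) = 1$, a direct consequence of the quasi-inverse relation, shows that the Janossy densities integrate to $1$. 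This produces a bona fide probability measure $\mu_\Lambda$ on the finite configurations in $\Lambda$.

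Next I would identify the correlation functions and the Laplace transform of $\mu_\Lambda$. Inserting \eqref{eq:4} into the standard formula expressing correlation functions as sums of Janossy densities integrated over extra variables, and once more appealing to the determinantal expansion, the series telescopes to $\rho_{n,T}(x_1,\dots,x_n) = \det(T_\Lambda(x_i,x_j))_{1\le i,j\le n}$; an analogous computation yields $L_{\mu_\Lambda}(f) = \Det(\I - T_\Lambda[1-e^{-f}])$ for every nonnegative $f$ supported in $\Lambda$. To remove the compactness restriction I would take an increasing sequence $\Lambda_m \uparrow E$: the Laplace transform formula makes the family $(\mu_{\Lambda_m})$ consistent, since for $f$ supported in $\Lambda_m$ the Fredholm determinant is unchanged when computed in a larger compact, so restricting $\mu_{\Lambda_{m+1}}$ to $\Lambda_m$ returns $\mu_{\Lambda_m}$. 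Kolmogorov's extension theorem in its point-process form then furnishes a unique measure $\mu_{T,\lambda}$ on $\X$ with these local marginals, and passing to the limit delivers the global Laplace transform and correlation functions; uniqueness is automatic because the Laplace transform determines the law of a point process.

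The main obstacle is twofold. The delicate analytic point is the boundary case in which $T_\Lambda$ carries $1$ in its spectrum, so that $\I - T_\Lambda$ is not invertible and $J[\Lambda]$ does not literally exist; this forces one either to prove the statement first under $\|T\| < 1$ and then approximate, or to treat the eigenvalue $1$ by a spectral truncation and a limiting argument. The structural point, and the heart of the matter, is showing that the determinantal prescription really defines a point process at all: establishing the nonnegativity and correct normalisation of \eqref{eq:4} --- equivalently, that $\det(T(x_i,x_j))$ satisfies the positivity conditions demanded of correlation functions --- is exactly where the positive semi-definiteness of $T$, hence of $J[\Lambda]$, is indispensable and where the construction would collapse for a non-positive kernel.
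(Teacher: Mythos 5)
The paper does not prove this theorem: it is imported wholesale from Shirai--Takahashi \cite{STa} (and Soshnikov \cite{Sa}), so there is no internal argument to measure your proposal against. Judged on its own, your architecture --- build the finite point process on each compact $\Lambda$ from the Janossy densities \eqref{eq:4}, check positivity via the positive semi-definiteness of $J[\Lambda]$ and normalisation via $\Det(\I-T_\Lambda)\Det(\I+J[\Lambda])=1$, then glue by consistency of the Laplace transforms and a projective-limit argument --- is exactly the standard route of the cited reference, and the individual assertions you make (spectrum of $J[\Lambda]$ in $[0,+\infty[$, nonnegativity of the minors, multiplicativity of the Fredholm determinant) are all correct.

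That said, two points you defer are where essentially all of the mathematical content lives, so the proposal as written is a scaffold rather than a proof. First, the claim that summing the Janossy densities over extra integrated variables ``telescopes'' to $\rho_{n,T}=\det(T(x_i,x_j))$ is the central determinantal identity of the whole theory: one must show
\begin{equation*}
\sum_{m\ge 0}\frac{1}{m!}\int_{\Lambda^m}\Det(\I-T_\Lambda)\,\det\bigl(J[\Lambda](u_i,u_j)\bigr)_{1\le i,j\le n+m}\,\lambda(\d y_1)\cdots\lambda(\d y_m)=\det\bigl(T_\Lambda(x_i,x_j)\bigr)_{1\le i,j\le n},
\end{equation*}
with $(u_1,\dots,u_{n+m})=(x_1,\dots,x_n,y_1,\dots,y_m)$, which amounts to relating the minors of $J[\Lambda]$ to those of $T_\Lambda=J[\Lambda](\I+J[\Lambda])^{-1}$; asserting that it ``telescopes'' is not an argument. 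The Laplace transform identity is of the same nature and is likewise only asserted. Second, you correctly flag that \eqref{eq:4} is meaningless when $1$ lies in the spectrum of $T_\Lambda$ (then $\Det(\I-T_\Lambda)=0$ and $J[\Lambda]$ is undefined), but the fix is not optional: one must either prove the theorem for $\|T_\Lambda\|<1$ and pass to the limit using a continuity statement such as Proposition~\ref{prop:convergencefaible}, or abandon the Janossy route in that case and invoke Lenard's theorem to build the process directly from the correlation functions, which requires verifying a positivity condition stronger than $\rho_n\ge 0$ together with a growth bound guaranteeing uniqueness. Both gaps are fillable --- they are filled in \cite{STa} --- but they are the proof.
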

We also need a simple condition on the kernels to ensure proper convergence of the associated determinantal measure. This is provided by Proposition 3.10 in~\cite{STa}:
\begin{prop}
\label{prop:convergencefaible}
Let $(T^{(n)})_{n \ge 1}$ be integral operators with nonnegative continuous kernels $T^{(n)}(x,y), \ x, y \in E$. Assume that $T^{(n)}$ satisfy Hypothesis~\ref{hyp:condition_T}, $n \ge 1$, and that $T^{(n)}$ converges to a kernel $T$ uniformly on each compact as $n$ tends to infinity. Then, the kernel $T$ defines an integral operator $T$ satisfying Hypothesis~\ref{hyp:condition_T}. Moreover, the determinantal measure $\mu_{T^{(n)}, \lambda}$ converges weakly to the measure $\mu_{T, \lambda}$ as $n$ tends to infinity.
\end{prop}

\noindent
In the remainder of this section, we shall consider a general determinantal process of kernel $T$ with respect to a reference measure $\lambda$ on $E$. We will assume that $T$ satisfies Hypothesis~\ref{hyp:condition_T}. Consider a compact subset $\Lambda \subset E$. Then, by Mercer's theorem, the projection operator $T_\Lambda$ can be written as
\begin{equation}
\label{eq:kdecomp}
T_\Lambda(x,y) = \sum_{n\ge 0} \lambda_n^\Lambda \varphi_n^\Lambda(x) \overline{ \varphi_n^\Lambda(y) },
\end{equation}
for $x,y \in \C$. Here, $(\varphi_n^\Lambda)_{n \in \N}$ are the eigenvectors of $T_\Lambda$ and $(\lambda_n)_{n \in \N}$ the associated eigenvalues. Note that since $T_\Lambda$ is trace-class, we have 
\begin{equation*}
\sum_{n \ge 0} | \lambda_n^\Lambda| < \infty.
\end{equation*}
In this case, the operator $J[\Lambda]$ defined in \eqref{eq:defj} can be decomposed in the same basis as $T_\Lambda$.
\begin{equation}
\label{eq:jdecomp}
J[\Lambda](x,y) = \sum_{n\ge 0} \frac{\lambda_n^\Lambda}{1-\lambda_n^\Lambda} \varphi_n^\Lambda(x) \overline{ \varphi_n^\Lambda(y) },
\end{equation}
for $x, y \in \Lambda$.
\\

\noindent
Let us conclude this section by mentioning the particular case of the determinantal projection process. We define a projection kernel (onto $\{ \phi_n,\ 0 \le n \le N\} \subset L^2(E,\lambda)  $) to be
\begin{equation}
\label{eq:projectionkernel}
T_p(x,y) =  \sum_{n= 0}^N \varphi_n(x) \overline{ \varphi_n(y) },\quad \forall x,y\in\C
\end{equation}
where $N \in \N$, and $(\varphi_n)_{n \in \N}$ is an orthonormal family of $L^2(E,\lambda)$. We call the associated determinantal process a determinantal projection process (onto $\{ \phi_n,\ 0 \le n \le N\} \subset L^2(E,\lambda)  $). In this case, it is known that the associated determinantal process has $N$ points almost surely, as was first proved in \cite{Sa}. These determinantal processes are particularly interesting since they benefit from a specific simulation technique which will be explained in the next section. 

\section{Simulation of determinantal processes}
\label{sec:algo}

The main results of this section can be found in the seminal work of \cite{HKPVa}, along with the precisions found in \cite{HKPVb} and \cite{LMa}. We recall the algorithm introduced there in order to insist on its advantages and disadvantages compared to directly simulating according to the densities. The idea of the algorithm presented in the previous papers is two-fold. First, it yields a way to simulate the number of points $n\in \N$ of any determinantal process in a given compact $\Lambda \subset E$. Second, it explicits an efficient algorithm for the simulation of the (unordered) density of the point process, conditionally on there being $n$ points, i.e. it yields an efficient algorithm to simulate according to the density $j_\Lambda^n$. Let us now discuss in detail these two steps.
\\

\noindent
The central theorem of this section is proved in \cite[Theorem 7]{HKPVa}. Let us recall it here, as it will be used throughout this paper:
\begin{thm}
\label{thm:thm7}
Let $T$ be a trace-class kernel (we will often take $T_\Lambda$, which is indeed trace-class), which we write
\begin{equation*}
T(x,y) = \sum_{n \ge 1} \lambda_n \varphi_n(x) \overline{ \varphi_n(y)} ,\quad x,y \in E.
\end{equation*}
Then, define $(B_k)_{k \in \N}$ a series (possibly infinite) of independent Bernoulli random variables of mean $\E[ B_k] = \lambda_k$, $k \in \N$. The Bernoulli random variables are defined on a distinct probability space, say $(\Omega,\tilde{F})$. Then, define the (random) kernel
\begin{equation*}
T_B(x,y) =  \sum_{n \ge 1} B_n \varphi_n(x) \overline{ \varphi_n(y)} ,\quad x,y \in E.
\end{equation*}
We define the point process $\eta$ on $(\Xi \times \Omega, \F \otimes \tilde{F})$ as the point process obtained by first drawing the Bernoulli random variables, and then the point process with kernel $T_B$. 
\\

\noindent
Then, we have that in distribution, $\eta$ is a determinantal process with kernel $T$.
\end{thm}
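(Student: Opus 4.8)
The plan is to identify $\eta$ through its Laplace transform and then invoke the uniqueness part of Theorem~\ref{thm:existence}. First I would condition on the Bernoulli variables $(B_k)$. Since $\sum_{k}\lambda_k<\infty$, the Borel--Cantelli lemma guarantees that almost surely only finitely many $B_k$ equal $1$; writing $S=\{k:B_k=1\}$, the conditional kernel $T_B=\sum_{n\in S}\varphi_n\otimes\overline{\varphi_n}$ is then a finite-rank orthogonal projection, whose spectrum $\{0,1\}$ lies in $[0,1]$, so that $T_B$ satisfies Hypothesis~\ref{hyp:condition_T}. Consequently Theorem~\ref{thm:existence} applies to each realization, and the conditional Laplace transform of $\eta$ given $(B_k)$ equals $\Det(\I-T_B[1-e^{-f}])$. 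Taking expectation over the Bernoulli space gives
\begin{equation*}
L_\eta(f)=\E_B\!\left[\Det\!\left(\I-T_B[1-e^{-f}]\right)\right],
\end{equation*}
so the whole statement reduces to the identity $\E_B[\Det(\I-T_B[K])]=\Det(\I-T[K])$, where I have set $K=1-e^{-f}$.

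To establish this identity I would expand both Fredholm determinants by Proposition~\ref{thm:developpement_det_alpha}. Setting $\psi_n=\sqrt{K}\,\varphi_n$, the relevant kernel is $T_B[K](x,y)=\sum_n B_n\,\psi_n(x)\overline{\psi_n(y)}$, and for fixed points $x_1,\dots,x_k$ the Cauchy--Binet formula applied to the factorization $\big(\psi_n(x_i)\big)_{i,n}\,\mathrm{diag}(B_n)\,\big(\overline{\psi_n(x_j)}\big)_{n,j}$ yields
\begin{equation*}
\det\big(T_B[K](x_i,x_j)\big)_{1\le i,j\le k}=\sum_{|S|=k}\Big|\det\big(\psi_n(x_i)\big)_{i\in[k],\,n\in S}\Big|^2\ \prod_{n\in S}B_n .
\end{equation*}
Because the $B_n$ are independent with $\E[B_n]=\lambda_n$ and each index of $S$ is distinct, $\E_B[\prod_{n\in S}B_n]=\prod_{n\in S}\lambda_n$; running Cauchy--Binet backwards then identifies $\E_B[\det(T_B[K](x_i,x_j))]$ with $\det(T[K](x_i,x_j))$, since $T[K](x,y)=\sum_n\lambda_n\psi_n(x)\overline{\psi_n(y)}$. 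Inserting this termwise equality into the two expansions and reassembling the series gives the desired identity, and hence $L_\eta(f)=\Det(\I-T[1-e^{-f}])=L_{\mu_{T,\lambda}}(f)$; uniqueness in Theorem~\ref{thm:existence} then forces $\eta=\mu_{T,\lambda}$ in distribution.

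The main obstacle is analytic rather than algebraic: I must justify interchanging the expectation $\E_B$ with the infinite series and the $k$-fold integrals of the Fredholm expansion. The natural route is dominated convergence, controlling the tails through trace-class bounds --- here $0\le\lambda_n\le1$, $\sum_n\lambda_n<\infty$, and $K$ is bounded and supported in a fixed compact containing $\mathrm{supp}(f)$, so that $T[K]$ and every $T_B[K]$ are trace-class with norms uniformly dominated by that of $T[K]$. One should also check convergence of the Cauchy--Binet sums over $k$-subsets $S$, which again follows from $\sum_n\lambda_n<\infty$. Once these integrability issues are settled, the conditioning argument and the termwise Cauchy--Binet identity combine to give the result.
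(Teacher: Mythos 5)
The paper does not actually prove this statement: it is quoted verbatim from \cite[Theorem 7]{HKPVa} and used as a black box, so there is no internal proof to compare against. Your argument is, on its own terms, essentially correct and is in the spirit of the standard proof: condition on $(B_k)$, note via Borel--Cantelli (using $\sum_k\lambda_k=\trace T<\infty$) that $T_B$ is a.s.\ a finite-rank projection satisfying Hypothesis~\ref{hyp:condition_T}, write the conditional Laplace transform as a Fredholm determinant, and reduce everything to the termwise identity $\E_B\bigl[\det(T_B[K](x_i,x_j))\bigr]=\det(T[K](x_i,x_j))$, which your Cauchy--Binet computation establishes correctly (the independence of the $B_n$ and the distinctness of the indices in each $S$ are exactly what is needed). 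The one point I would tighten is your justification of the interchange: it is \emph{not} true that the trace norms of the $T_B[K]$ are uniformly dominated by that of $T[K]$ --- a realization may have $B_n=1$ for indices with tiny $\lambda_n$, so $\trace T_B[K]=\sum_n B_n\|\psi_n\|^2$ can exceed $\trace T[K]=\sum_n\lambda_n\|\psi_n\|^2$, and the ``all ones'' kernel $\sum_n\psi_n\otimes\overline{\psi_n}$ need not even be trace class. The clean fix is to observe that every determinant $\det(T_B[K](x_i,x_j))$ is nonnegative (Gram matrix of a positive kernel), so Tonelli applies directly to the expansion of $\Det(\I+T_B[K])$ and gives $\E_B\bigl[\sum_k\frac1{k!}\int\det(T_B[K])\bigr]=\Det(\I+T[K])<\infty$; this finiteness then licenses Fubini for the alternating series defining $\Det(\I-T_B[K])$. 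With that repair your proof is complete and, as far as I can tell, matches the mechanism of the original HKPV argument (which runs the same Cauchy--Binet computation at the level of correlation functions rather than Laplace transforms).
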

\noindent
For the remainder of this section, we consider a compact subset $\Lambda \subseteq E$, and the associated determinantal process of kernel $T_\Lambda$. We wish to simulate a realization of the aforementioned point process.

\subsection{Number of points}

According to Theorem~\ref{thm:thm7}, the law of the number of points on $E$ has the same law as a sum of Bernoulli random variables. More precisely,
\begin{equation*}
|\xi(E)| \sim \sum_{n \ge 1} B_n,
\end{equation*}
where $B_n \sim \mathrm{Be}(\lambda_n)$, $n \in \N$. Define $T = \sup \{ n \in \N_* \ / \ B_n = 1 \} < \infty$. Since $\sum_{n \ge 1} \lambda_n = \sum_{n \ge 1} \PP(B_n =1) < \infty$, by a direct application of the Borel-Cantelli lemma, we have that $T < \infty$ almost surely. Hence the method is to simulate a realization $m$ of $T$, then conditionally on $T=m$, simulate $B_1,\dots,B_{m-1}$ which are independent of $T$ (note here that $B_m = 1$ almost surely).
\\

\noindent
The simulation of the random variable $T$ can be obtained by the inversion method, as we know its cumulative distribution function explicitly. Indeed, for $n \in \N$, 
\begin{equation*}
\PP(T=n) = \lambda_n \prod_{i = n+1}^\infty (1-\lambda_i),
\end{equation*}
hence 
\begin{equation}
\label{eq:repartitionginibre}
F(t) = \PP(T \le t) = \sum_{n \le t} \lambda_n \prod_{i = n+1}^\infty (1-\lambda_i), \quad \forall t \in \N
\end{equation}
While it is possible to simulate an approximation of the previous distribution function, this requires a numerical approximation of the infinite product, as well as the pseudo-inverse $F^{-1}(u) = \inf\{t \in \N \ / \ F(t) \ge u \}$. We also note that in many practical cases, as is the case with the Ginibre point process, the numerical calculations of the previous functions may well be tedious.
\\

\noindent
Now, assume that we have simulated $B_1,\dots, B_{m-1}, B_m$. If we write $I := \{ 1 \le i \le m \ : \ B_i = 1 \}$, then Theorem~\ref{thm:thm7} assures us that it remains to simulate a determinantal point process with kernel $\sum_{i \in I} \varphi_i(x) \overline{\varphi_i(y) }$, $x, y \in \Lambda$, which has $| I |$ points almost surely. This will be the aim of the next subsection.

\subsection{Simulation of the positions of the points}

Assume we have simulated the number of points $|I| = n \in \N$ according to the previous subsection.  For the clarity of the presentation, we also assume that $B_1=1,\dots, B_n=1$, where $(B_i)_{i \in \N}$ are the Bernoulli random variables defined previously. This assumption is equivalent to a simple reordering of the eigenvectors $(\varphi_i)_{i \in \N}$. Then, conditionally on there being $n$ points, we have reduced the problem to that of simulating the vector $(X_1,\dots,X_n)$ of joint density
\begin{equation*}
p(x_1,\dots,x_n) = \frac{1}{n!} \det\left( \tilde{T}(x_i,x_j)\right)_{1 \le i,j \le n},
\end{equation*}
where $\tilde{T}(x,y) = \sum_{i=1}^n \psi_i(x) \overline{\psi_i(y)}$, for $x,y\in \Lambda$, where here $(\psi_i)_{i \in \N}$ is a reordering of  $(\varphi_i)_{i \in \N}$. The determinantal point process of kernel $\tilde{T}$ has $n$ points almost surely, which means that it remains to simulate the unordered vector $(X_1,\dots,X_n)$ of points of the point process. The idea of the algorithm is to start by simulating $X_n$, then $X_n | X_{n-1}$, up until $X_1 | X_2,\dots,X_{n}$. The key here is that in the determinantal case, the density of these conditional probabilities takes a computable form. Let us start by observing, as is used abundantly in \cite{HKPVa}, that
\begin{equation*}
\det\left( \tilde{T}(x_i,x_j)\right)_{1 \le i,j \le n} = \det\left( \psi_j (x_i) \right)_{1 \le i,j \le n}\det\left( \overline{\psi_i (x_j)} \right)_{1 \le i,j \le n},
\end{equation*}
which allows us to visualize the way the algorithm functions. Indeed, the density of $X_1$ is, for $x_1 \in \Lambda$:
\begin{align*}
p_1(x_1) &= \int \dots \int p(x_1,\dots,x_n) \, \lambda(\mathrm{d}x_2)\dots\lambda(\mathrm{d}x_n)\\
	&= \frac{1}{n!}\sum_{\tau,\sigma \in S_n} \mathrm{sgn}(\tau)\mathrm{sgn}(\sigma) \psi_{\tau(1)}(x_1)\overline{\psi_{\sigma(1)}(x_1)}  \prod_{k=2}^n \int  \psi_{\tau(k)}(x_k)\overline{\psi_{\sigma(k)}(x_k)} \,\lambda(\mathrm{d}x_k)\\
	&= \frac{1}{n!}\sum_{\sigma \in S_n} |\psi_{\sigma(1)}(x_1)|^2\\
	&= \frac{1}{n} \sum_{k=1}^n |{\psi}_{k}(x_1)|^2,
\end{align*}
where $S_n$ is the $n$-th symmetric group and $\mathrm{sgn}(\sigma)$ is the sign of the permutation $\sigma \in S_n$. By the same type of calculations, we can calculate the law of $X_2 | X_1$, whose density with respect to $\lambda$ is given by
\begin{multline*}
p_{2 | X_1} (x_2) = \frac{p_2 (X_1,x_2) }{p_1(X_1)} = \frac{1}{(n-1)!  \sum | {\psi}_i (X_1) |^2  } \\
\shoveright{ \sum_{\sigma \in S_n} \left( |{\psi}_{\sigma(1)}(X_1) |^2 | {\psi}_{\sigma(2)} (x_2) |^2 -  {\psi}_{\sigma(1)}(X_1) \overline{{\psi}_{\sigma(2)}}(X_1){\psi}_{\sigma(2)}(x_2) \overline{{\psi}_{\sigma(1)}}(x_2)  \right) }  \\
	\shoveleft{ = \frac{1}{n-1 } \left( \sum_{i=1}^n | {\psi}_{i} (x_2) |^2 - |  \sum_{i=1}^n \frac{{\psi}_{i}(X_1)}{ \sqrt{\sum | {\psi}_j (X_1) |^2}} \overline{{\psi}_{i} }(x_2)   |^2  \right). }\\
\end{multline*}
The previous formula can be generalized recursively, and has the advantage of giving a natural interpretation of the conditional densities. Indeed, we can write the conditional densities at each step in a way that makes the orthogonalization procedure appear. This is presented in the final algorithm, which was explicited in \cite{LMa} (see also \cite{HKPVa} for the proof). As in \cite{LMa}, we write $\bold v (x) = ( \tilde{\psi}_1(x),\dots, \tilde{\psi}_n(x) )^t$, where $t$ stands for the transpose.
\begin{algorithm}[h]
\caption{Simulation of determinantal projection point process}
 \begin{algorithmic}[H]
 \label{algo:simugen}
   \STATE {\bf sample } $X_n$ from the distribution with density $p_n(x) = \| \bold v(x) \|^2 /n$, $\ x \in \Lambda$ \;
   \STATE $\bold e_1 \leftarrow \bold v(X_n) / \| \bold v(X_n) \|$
   \FOR {$i=n-1 \to 1$}
   \STATE {\bf sample } $X_i$ from the distribution with density \; 
   \begin{equation*}
   p_i(x) = \frac{1}{i} \Big[ \| \bold v(x) \|^2 - \sum_{j = 1}^{n-i} | \bold e_j^* \bold v(x) |^2 \Big]
   \end{equation*}
   \STATE  $\bold w_i \leftarrow \bold v(X_i) - \sum_{j=1}^{n-i} \left(\bold e_j^* \bold v(X_i)\right) \bold e_j,\quad \bold e_{n-i+1} \leftarrow \bold w_i / \| \bold w_i \|$
   \ENDFOR
   \RETURN $(X_1,\dots,X_n)$
 \end{algorithmic}
\end{algorithm}

\noindent
Then, Algorithm~\ref{algo:simugen} yields a sample $(X_1,\dots,X_n)$ which has a determinantal law with kernel $\displaystyle \tilde{T}(x,y) = \sum_{n = 1}^{n} {\psi}(x) \overline{{\psi}(y)}$, $x, y \in \Lambda$. 

\section{Simulation of the Ginibre point process}
\label{sec:ginibre}

\subsection{Definition and properties}

The Ginibre process, denoted by $\mu$ in the remainder of this paper, is defined as the determinantal process on $\C$ with integral kernel
\begin{equation}
\label{eq:ginibre}
K(z_1,z_2) = \frac{1}{\pi} e^{z_1 \bar{z_2}} e^{-\frac{1}{2}( |z_1|^2 + |z_2|^2)},\quad z_1,z_2 \in \C,
\end{equation}
with respect to $\lambda := \d \ell(z)$, the Lebesgue measure on $\C$ (i.e. $\d \ell (z) = \d x\, \d y$, when $z = x + i y$).
It can be naturally decomposed as:
\begin{equation*}
K(z_1,z_2) = \sum_{n \ge0} \phi_n(z_1)\overline{\phi_n(z_2)},\quad z_1, z_2 \in \C,
\end{equation*}
where $\phi_n(z) :=  \frac{1}{\sqrt{\pi n!}} e^{-\frac{1}{2} | z |^2} z^n$, for $n \in \N$ and $z \in \C$. It can be easily verified that $(\phi_n)_{n \in \N}$ is an orthonormal family of $L^2(\C,\mathrm{d}\ell)$. In fact, $(\phi_n)_{n \in \N}$ is a dense subset of $L^2(\C,\d \ell)$. The Ginibre process $\mu$ verifies the following basic properties:
\begin{prop}
The Ginibre process $\mu$, i.e. the determinantal process with kernel $K$ satisfies the following:
\begin{itemize}
\item $\mu$ is ergodic with respect to the translations on the plane.
\item $\mu$ is isotropic.
\item $\mu(\C) = + \infty$ almost surely, i.e. the Ginibre point process has an infinite number of points almost surely.
\end{itemize}
\end{prop}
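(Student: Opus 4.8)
The three assertions are of quite different natures, so I would treat them separately, starting with the two invariance properties since they follow directly from the structure of the kernel. For isotropy, note that under a rotation $z \mapsto e^{i\theta}z$ one has $e^{i\theta}z_1\,\overline{e^{i\theta}z_2} = z_1\bar z_2$ and $|e^{i\theta}z| = |z|$, so $K(e^{i\theta}z_1, e^{i\theta}z_2) = K(z_1,z_2)$ identically. Hence every correlation function $\rho_n(z_1,\dots,z_n) = \det(K(z_i,z_j))$ is rotation invariant, and since a simple point process is determined by its correlation functions, the law $\mu$ is invariant under rotations about the origin.

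For translation invariance the kernel is not literally invariant, but expanding the exponent gives the gauge relation
\[
K(z_1 + a, z_2 + a) = e^{\,i\,\mathrm{Im}(\bar a z_1)}\,K(z_1,z_2)\,e^{-i\,\mathrm{Im}(\bar a z_2)}, \qquad a \in \C .
\]
The unimodular prefactors cancel in $\det(K(z_i+a,z_j+a))$ (multiply row $i$ by $e^{i\,\mathrm{Im}(\bar a z_i)}$ and column $i$ by its conjugate), so every $\rho_n$ is translation invariant and $\mu$ is stationary under the action $\tau_a$ of $\C \cong \R^2$. To upgrade stationarity to ergodicity I would establish the stronger mixing property. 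The crucial estimate is $|K(z,w)| = \tfrac1\pi e^{-\frac12|z-w|^2}$, which follows from $|z-w|^2 = |z|^2+|w|^2 - 2\,\mathrm{Re}(z\bar w)$. Consequently, for fixed $x_1,\dots,x_m$ and $y_1,\dots,y_k$, the off-diagonal entries $K(x_i, y_j + a)$ of the matrix defining $\rho_{m+k}(x_1,\dots,x_m,y_1+a,\dots,y_k+a)$ tend to $0$ as $|a|\to\infty$, so the determinant factorizes and $\rho_{m+k}(x_1,\dots,x_m,y_1+a,\dots,y_k+a) \to \rho_m(x_1,\dots,x_m)\,\rho_k(y_1,\dots,y_k)$. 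This asymptotic factorization of all correlation functions is the input needed to conclude that $\mu$ is mixing, hence ergodic, via the general theory of determinantal fields (see \cite{Sa,STa}).

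I expect this ergodicity step to be the main obstacle: the passage from \emph{the correlation functions factorize} to \emph{the measure $\mu$ is mixing} is not formal, since the correlations alone do not control the convergence of $\mu(A \cap \tau_a^{-1}B)$ without a handle on the full joint law. I would make it rigorous either by establishing the factorization at the level of the Laplace functional, passing to the limit $|a|\to\infty$ inside the Fredholm determinant $\Det(\I - K[\,\cdot\,])$ using the decay of $|K|$ and dominated convergence, or simply by invoking the corresponding mixing theorem for determinantal processes from \cite{Sa}.

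Finally, for the infinitude of points, the total count $\mu(\C)$ is not governed by a trace-class operator, so I would argue by exhaustion. Writing $B_R$ for the centred ball of radius $R$ and $N_R := \xi(B_R)$, the first correlation function is constant, $\rho_1(z) = K(z,z) = \tfrac1\pi$, so $\E[N_R] = \int_{B_R} K(z,z)\,\d\ell(z) = R^2$. Using $\rho_2(z,w) = K(z,z)K(w,w) - |K(z,w)|^2$ one obtains $\mathrm{Var}(N_R) = \E[N_R] - \int_{B_R}\!\int_{B_R} |K(z,w)|^2\,\d\ell(z)\,\d\ell(w) \le \E[N_R] = R^2$. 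Chebyshev's inequality then gives $\PP(N_R \le R^2/2) \le 4\,\mathrm{Var}(N_R)/\E[N_R]^2 \le 4/R^2$, which is summable along $R = k \in \N$; by Borel--Cantelli, almost surely $N_k > k^2/2$ for all large $k$, and since $N_R$ increases to $\mu(\C)$ this forces $\mu(\C) = +\infty$ almost surely. (Alternatively, one may observe that $K$ is the orthogonal projection onto $\overline{\mathrm{span}}(\phi_n)$, an infinite-rank projection, and that each eigenvalue $\lambda_n^{(R)} = \gamma(n+1,R^2)/n!$ of $K_{B_R}$ increases to $1$, so the Bernoulli representation of Theorem~\ref{thm:thm7} produces arbitrarily many points.)
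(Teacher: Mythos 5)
Your proposal is correct, and for the first two items it essentially coincides with the paper's proof: the exact rotation invariance of $K$ gives isotropy, the unimodular gauge factors $e^{i\,\mathrm{Im}(\bar a z_j)}$ cancel in the determinants to give translation invariance of all $\rho_n$, and ergodicity is ultimately delegated to Soshnikov's theorem for translation-invariant determinantal fields --- exactly what the paper does by citing \cite[Theorem 7]{Sa}. You are right to flag that upgrading the asymptotic factorization of correlation functions to mixing of the measure is not formal; the paper sidesteps this entirely by the citation, and your sketch via the decay $|K(z,w)|=\tfrac1\pi e^{-\frac12|z-w|^2}$ is the standard route behind that theorem. Where you genuinely diverge is the third item: the paper simply observes that $\trace K=+\infty$ and invokes the classical criterion (Theorem 4 in \cite{Sa}) that a determinantal process with a kernel of infinite trace has infinitely many points almost surely, whereas you give a self-contained second-moment argument ($\E[N_R]=R^2$, $\mathrm{Var}(N_R)\le\E[N_R]$ since the variance of a determinantal count never exceeds its mean, then Chebyshev and Borel--Cantelli along integer radii). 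Your computation is correct and has the advantage of being elementary and quantitative; the paper's citation is shorter and is really the same fact packaged as a general theorem (your parenthetical remark about the eigenvalues $\gamma(n+1,R^2)/n!\uparrow 1$ and the Bernoulli representation is precisely the mechanism behind that theorem). Either route is acceptable.
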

\begin{proof}
For $a \in \C$, note that $K(z_1 - a,z_2-a) = K(z_1,z_2) e^{-\frac{1}{2} a(\bar{z_2} - \bar{z_1}) + \frac{1}{2}  \bar{a}(z_1-z_2) } $, for $z_1,z_2 \in \C$. Hence, 
\begin{equation*}
\rho(z_1-a,\dots,z_n-a) = \det ( K(z_i-a,z_j-a) )_{1\le i,j \le n} =  \det ( K(z_i,z_j) )_{1\le i,j \le n} ,
\end{equation*}
which means that $\mu$ is invariant with respect to translations. Ergodicity with respect to translations follows from \cite[Theorem 7]{Sa}. \\

\noindent
Moreover, for $\theta \in \R$, we have $K(z_1e^{i \theta},z_2e^{i \theta}) = K(z_1,z_2)$, for $z_1,z_2 \in \C$ (here and in the remainder of the paper, $i := \sqrt{-1}$). Hence, isotropy follows directly by uniqueness of the determinantal measure $\mu$.
\\

\noindent
We have that $\trace K = +\infty$, hence by a classical result (see e.g. Theorem $4$ in \cite{Sa}), the number of points in $\mu$ is almost surely infinite.
\end{proof}

\noindent
Since $\mu$ has an infinite number of points almost surely, it is impossible to simulate it directly. Therefore, in the remainder of this paper, we are interested in modifying the kernel $K$ in order to obtain versions of the Ginibre point process which we can simulate.

\subsection{Truncated Ginibre point process}
The first idea is to consider the truncated Ginibre kernel, defined for $N \in \N_*$ by
\begin{equation}
\label{eq:truncatedginibre}
K^N(z_1,z_2) = \sum_{n =0}^{N-1} \phi_n(z_1)\overline{\phi_n(z_2)},\quad z_1, z_2 \in \C,
\end{equation}
which is in fact a truncation of the sum in \eqref{eq:ginibre}. Additionally, we call $\mu^N$ the associated determinantal point process with intensity measure $\d \ell$. We remark that $\mu^N$ tends to
$\mu$ weakly, when $N$ goes to infinity. As it is a projection kernel of type \eqref{eq:projectionkernel}, we have seen previously that $\mu^N$ has $N$ points almost surely. $\mu^N$ is clearly not translation invariant anymore; however, it remains isotropic for the same reason that $\mu$ is. Physically, $\mu^N$ is the distribution of $N$ polarized electrons in a perpendicular magnetic field, filling the $N$ lowest Landau levels, as is remarked in \cite{SZTa}. As $\mu^{N}$ has $N$ points almost surely, it is entirely characterized by its joint distribution $p$ which is calculated in the following proposition.
\begin{prop}
Let $\mu^N$ be the point process with kernel given by \eqref{eq:truncatedginibre}. Then, $\mu^N$ has $N$ points almost surely and its joint density $p$ is given by
\begin{equation}
\label{eq:loijointe}
p(z_1,\dots,z_N) =  \frac{1}{\pi^N} \prod_{p=0}^{N} \frac{1}{ p!} \,
e^{- \sum_{p=1}^N |z_p|^2}
 \prod_{1\leq p<q \leq N}
| z_p - z_q|^2 ,
\end{equation}
for $z_1,\dots, z_N \in \C$.
\end{prop}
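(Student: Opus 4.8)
The plan is to exploit the fact that $K^N$ is a projection kernel of the form \eqref{eq:projectionkernel} onto the $N$-dimensional subspace spanned by $\phi_0,\dots,\phi_{N-1}$. By the discussion following \eqref{eq:projectionkernel}, the associated determinantal process therefore carries exactly $N$ points almost surely, which is the first assertion. Conditionally on there being $N$ points, the unordered vector of positions has, as recalled in Section~\ref{sec:algo}, the density $p(z_1,\dots,z_N)=\frac{1}{N!}\det\left(K^N(z_i,z_j)\right)_{1\le i,j\le N}$, so the whole problem reduces to evaluating this determinant explicitly.

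First I would write $A$ for the $N\times N$ matrix with entries $A_{ik}=\phi_{k-1}(z_i)$, $1\le i,k\le N$. The separable form $K^N(z_i,z_j)=\sum_{k=0}^{N-1}\phi_k(z_i)\overline{\phi_k(z_j)}$ says precisely that $\left(K^N(z_i,z_j)\right)_{i,j}=AA^*$, where $A^*$ denotes the conjugate transpose of $A$. Hence $\det\left(K^N(z_i,z_j)\right)=\det(A)\,\overline{\det(A)}=|\det A|^2$, and it remains to compute $\det A$.

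Next I would substitute the explicit expression $\phi_k(z)=\frac{1}{\sqrt{\pi k!}}\,e^{-|z|^2/2}z^k$ and pull the obvious row and column factors out of the determinant: from row $i$ the scalar $e^{-|z_i|^2/2}$, and from column $k$ the scalar $(\pi (k-1)!)^{-1/2}$. What remains is the Vandermonde determinant $\det\left(z_i^{k-1}\right)_{1\le i,k\le N}=\prod_{1\le p<q\le N}(z_q-z_p)$. Taking the squared modulus then turns the Vandermonde factor into $\prod_{1\le p<q\le N}|z_p-z_q|^2$ and the exponential prefactors into $e^{-\sum_{p=1}^N|z_p|^2}$.

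The final step is pure bookkeeping of the constants: collecting the column factors gives $\prod_{k=1}^N(\pi(k-1)!)^{-1}=\pi^{-N}\prod_{p=0}^{N-1}(p!)^{-1}$, and incorporating the normalizing $1/N!$ from the density via the identity $\frac{1}{N!}\prod_{p=0}^{N-1}\frac{1}{p!}=\prod_{p=0}^{N}\frac{1}{p!}$ reproduces exactly \eqref{eq:loijointe}. I do not expect any genuine obstacle here; the only point demanding care is the constant, and specifically recognizing that the $1/N!$ arising from the passage between the correlation function and the joint density is precisely what promotes the product $\prod_{p=0}^{N-1}$ to $\prod_{p=0}^{N}$ in the stated formula.
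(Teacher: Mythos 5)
Your proposal is correct and follows essentially the same route as the paper: writing the Gram matrix $\left(K^N(z_i,z_j)\right)_{i,j}=AA^*$ with $A_{ik}=\phi_{k-1}(z_i)$, reducing to $|\det A|^2$, and evaluating $\det A$ as a Vandermonde determinant times the exponential and normalizing prefactors. Your explicit bookkeeping of the constant, in particular the identity $\frac{1}{N!}\prod_{p=0}^{N-1}\frac{1}{p!}=\prod_{p=0}^{N}\frac{1}{p!}$, is a welcome clarification of a step the paper leaves implicit.
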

\begin{proof}
\begin{equation*}
p(z_1,\dots,z_N) = \frac{1}{N!} \det \left( K^N\left( x_i,\, x_j\right)
    \right)_{1\le i,j\le N}, \quad z_1,\dots,z_N \in \C,
\end{equation*}
and in this case $p$ can be explicited. Indeed, note that 
\[
p(z_1,\dots,z_N) = \frac{1}{N!}  A^N(z_1,\dots,z_N) A^N(z_1,\dots,z_N)^* ,
\]
 where the matrix $A^N :=(A^N_{ph} )_{1\leq p,h\leq N}$ is given by
\[
A^N_{ph} := \phi_{h-1}(x_p)
\]
and $A^N(x_1,\dots,x_N)^* $ denotes the transpose conjugate of $A^N(x_1,\dots,x_N) $.
Hence,
\[
p(z_1,\dots,z_N)=\frac{1}{N!}  |\mathrm{det}\, A^N(z_1,\dots,z_N) |^2.
\]
We recognize a Vandermonde determinant
\[
\mathrm{det}\, A^N(z_1,\dots,z_N) = \left(\prod_{p=0}^{N-1} \sqrt{\frac{1}{\pi p!}}\right)
e^{-\frac{1}{2} \sum_{p=1}^N |z_p|^2}
 \prod_{1\leq p<q \leq N}
(z_p - z_q),
\]
which leads to the following joint density for the $N$ points:
\begin{equation*}
p(z_1,\dots,z_N) = \frac{1}{\pi^N} \prod_{p=0}^{N} \frac{1}{ p!} \,
e^{- \sum_{p=1}^N |z_p|^2}
 \prod_{1\leq p<q \leq N}
| z_p - z_q|^2 , \quad z_1,\dots,z_N \in \C.
\end{equation*}
\end{proof}

\noindent
It is also known that the radii (in the complex plane) of the points of $\mu^N$ have the same distribution as independent gamma random variables. More precisely, we can find in \cite{Kb} the following result:
\begin{prop}
Let $\{X_1,\dots,X_N\}$ be the $N \in \N_*$ unordered points, distributed according to $\mu^N$. Then, $\{ | X_1 |, \dots, | X_N | \}$ has the same distribution as $\{Y_1,\dots,Y_N\}$, where for $1\le i \le N$, $Y_i ^2 \sim \mathrm{gamma}(i,1)$, and the $Y_i$ are independent.
\end{prop}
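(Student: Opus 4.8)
The plan is to compute the law of the moduli directly from the explicit joint density \eqref{eq:loijointe}, following the classical observation of Kostlan: integrate out the angular variables and isolate the radial part. First I would pass to polar coordinates $z_p = r_p e^{i\theta_p}$. The change of variables contributes a Jacobian $\prod_{p=1}^N r_p$, the Gaussian weight $e^{-\sum_p |z_p|^2}$ becomes $e^{-\sum_p r_p^2}$, and the prefactor $\frac{1}{\pi^N}\prod_{p=0}^N \frac{1}{p!}$ is a constant. The only factor coupling the angles to the radii is the Vandermonde term $\prod_{p<q}|z_p - z_q|^2$, so the entire difficulty is concentrated there.

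The key step is to expand the Vandermonde over the symmetric group. Using $\prod_{1\le p<q\le N}(z_q - z_p) = \det\left(z_p^{\,h-1}\right)_{1\le p,h\le N} = \sum_{\sigma\in S_N}\mathrm{sgn}(\sigma)\prod_{p=1}^N z_p^{\,\sigma(p)-1}$, I would write
$$\prod_{1\le p<q\le N}|z_p - z_q|^2 = \sum_{\sigma,\tau\in S_N}\mathrm{sgn}(\sigma)\,\mathrm{sgn}(\tau)\prod_{p=1}^N z_p^{\,\sigma(p)-1}\,\overline{z_p}^{\,\tau(p)-1}.$$
In polar coordinates the generic summand reads $\prod_p r_p^{\,\sigma(p)+\tau(p)-2}\,e^{i\theta_p(\sigma(p)-\tau(p))}$. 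Integrating each $\theta_p$ over $[0,2\pi)$ and invoking orthogonality of the characters $\theta\mapsto e^{ik\theta}$ forces $\sigma(p)=\tau(p)$ for every $p$, hence $\sigma=\tau$; each surviving factor contributes $2\pi$. All cross terms vanish and one is left with the permanent-type sum
$$(2\pi)^N\sum_{\sigma\in S_N}\prod_{p=1}^N r_p^{\,2(\sigma(p)-1)}.$$

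Collecting the Jacobian $\prod_p r_p$ with this sum yields the joint density of the ordered moduli proportional to $e^{-\sum_p r_p^2}\sum_{\sigma\in S_N}\prod_p r_p^{\,2\sigma(p)-1}$. I would then recognise this as the symmetrization of $\prod_{p=1}^N r_p^{\,2p-1}e^{-r_p^2}$, i.e. as the density of the \emph{unordered} family $\{R_1,\dots,R_N\}$ of independent radii where $R_p$ has density proportional to $r^{2p-1}e^{-r^2}$ on $\R_+$. A one-line change of variables $u=r^2$ then shows $R_p^2$ has density proportional to $u^{p-1}e^{-u}$, that is $R_p^2\sim\mathrm{gamma}(p,1)$, which is exactly the claim with $i=p$.

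I expect the main obstacle to be purely bookkeeping: verifying carefully that the angular integration annihilates all off-diagonal terms (so that a permanent, rather than a determinant, emerges), and checking that the symmetrization of the product density of the independent $R_p$ matches the symmetric moduli density term by term, normalizing constants included. The genuinely conceptual point to state cleanly is the passage from an ordered, symmetric joint density to a statement about an unordered set of \emph{independent} variables: the $Y_i$ in the statement are independent, whereas the moduli $|X_i|$ are exchangeable but not independent, so the equality holds only at the level of unordered configurations.
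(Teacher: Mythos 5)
Your proposal is correct, and the computation goes through as you describe: the angular integration kills all terms with $\sigma\neq\tau$, the surviving permanent-type sum is exactly the symmetrization of $\prod_{p}\frac{2}{(p-1)!}r_p^{2p-1}e^{-r_p^2}$, and the normalizing constants match since $\frac{1}{N!}\prod_{p=0}^{N-1}\frac{1}{p!}=\prod_{p=0}^{N}\frac{1}{p!}$. The paper itself gives no proof of this proposition and simply cites Kostlan; your argument is precisely the classical one from that reference, including the essential caveat you state at the end, namely that the identification with \emph{independent} $Y_i$ holds only at the level of the unordered set, the moduli themselves being exchangeable but not independent.
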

\noindent
However, it should be noted that this does not yield a practical simulation technique, as the angles of $X_1,\dots, X_N$  are strongly correlated, and do not follow a known distribution. 
\\

We now move on to the problem of simulating a truncated Ginibre point process with kernel given by \eqref{eq:truncatedginibre}. Since $\mu^N$ has $N$ points almost surely, there is no need to simulate the number of points. One only needs to simulate the positions of the $N$ points.  For this specific case, there is in fact a more natural way of simulating the Ginibre process. Indeed, it was proven in \cite{Ga} that the eigenvalues of an $N\times N$ hermitian matrix with complex gaussian entries are distributed according to $\mu^N$. More precisely, consider a matrix $N := (N_{n m})_{1 \le n,m \le N}$, such that for $1 \le n,m \le N$, 
\begin{equation*}
N_{n m} = \frac{1}{\sqrt{2}} \left(N_{n m}^1 + i N_{n m}^2\right),
\end{equation*}
where $N_{n m}^1,N_{n m}^2 \sim \mathcal{N}(0,1)$, $1 \le n,m \le N$ are independent centered gaussian random variables. Then, the eigenvalues of $N$ are distributed according to $\mu^N$. This is by far the most efficient way of simulating the truncated Ginibre process.
\\

\noindent
We also remark that we could have applied the simulation technique of Section~\ref{sec:algo} in order to simulate the truncated Ginibre point process. However, the simulation procedure is much slower than calculating the eigenvalues of an $N \times N$ matrix. We still show the results of the algorithm of a realization of the resulting point process in the following. This allows proper visualization of the associated densities. We chose $a=3$ and $N = 8$ in this example. We plot the densities $p_i$ as color gradients before the simulation of the $(N-i)$-th point. The steps plotted in the following figure correspond to $i=7, i=4$, and $i=1$ respectively (Algorithm~\ref{algo:simu} is used and is run from $i = N$ to $i=1$). We also mark by red points the previously simulated points. Therefore, the point process obtained at the end of the algorithm consists of the red points in the third figure. 
\begin{figure}[!h]
\minipage{0.3\textwidth}
  \includegraphics[width=\linewidth]{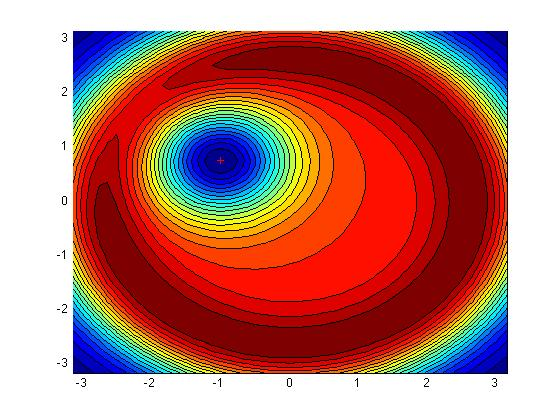}
\endminipage\hfill
\minipage{0.3\textwidth}
  \includegraphics[width=\linewidth]{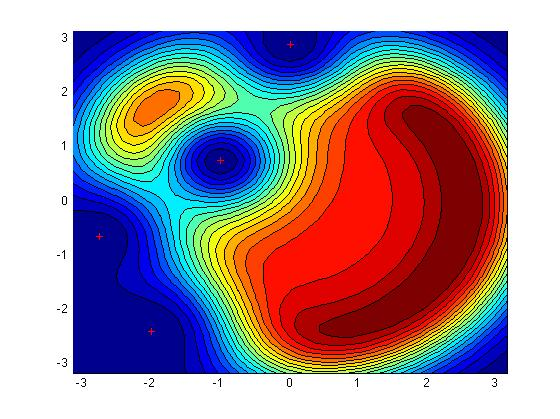}
\endminipage\hfill
\minipage{0.3\textwidth}%
  \includegraphics[width=\linewidth]{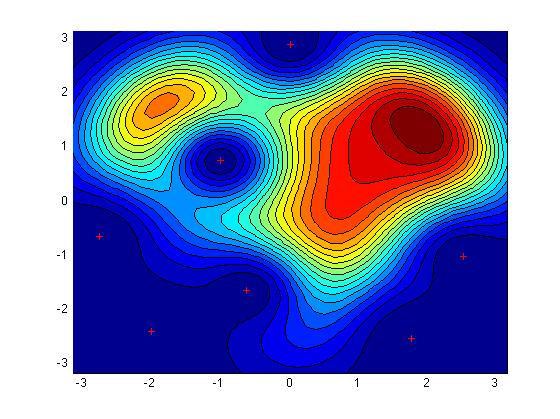}
\endminipage
\end{figure}
\\
\\
\\

\noindent
However, one runs into a practical problem when simulating the truncated Ginibre process: the support of its law is the whole of $\C^N$. Recall that the joint law of $\mu^N$ is known to be given by \eqref{eq:loijointe} which has support on $\C^N$. Moreover, projecting onto a compact subset randomizes the number of points in the point process. Therefore, this first method is only useful in applications where the point process need not be in a fixed compact subset of $E$. 

\subsection{Ginibre point process on a compact subset}

We now consider more specifically the projection of the Ginibre process onto $\mathcal{B}_R$, and thus we consider the projection kernel $K_{R}:=P_{\mathcal{B}_R}K P_{\mathcal{B}_R}$ of the integral operator $K$ onto $\mathrm{L}^2(\mathcal{B}_R, \d \ell)$, where $\mathcal{B}_R := \overline{\mathcal{B}(0,R)}$ is the closed ball of $\C$ of radius $R\ge 0$ with center $0$. In this specific case, the kernel of the operator $K_R$ takes the form:
\begin{equation}
\label{eq;ginibrecompact}
K_R(z_1,z_2) =  \sum_{n \ge0} \lambda_n^R\phi_n^R(z_1)\overline{\phi_n^R(z_2)},
\end{equation}
where $\phi_n^R(z) := Z_{R,n}^{-1} \phi_n(z) 1_{z \in \mathcal{B}_R}$, $n \in \N$, $z \in \C$ and $Z_{R,n}^{-1}\in\R$ is a constant depending only on $n$. This result does not hold in general, but is due to the fact that $(\phi_n^R(\cdot))_{n \ge 0}$ is still an orthonormal family of $L^2(\mathcal{B}_R,\mathrm{d}z)$. Indeed, for $m,n \in \N$,
\begin{align*}
\int_{\mathcal{B}_R} \phi_n^R(z) \overline{\phi_m^R(z)} \, \d \ell(z) &= Z_{R,n}^{-2}  \left( \frac{1}{\sqrt{n! m!}}  \int_0^R r^{n+m+1} e^{-r^2} \,\mathrm{d}r  \right)\left(\frac{1}{\pi} \int_{-\pi}^\pi e^{i(n-m)\theta}\,\mathrm{d}\theta \right)\\
&=  Z_{R,n}^{-2} 1_{n = m} \left( \frac{2}{n!}  \int_0^R r^{2n+1} e^{-r^2} \,\mathrm{d}r  \right)\\
&= Z_{R,n}^{-2} 1_{n = m} \frac{\gamma(n+1,R^2)}{n!} ,
\end{align*}
where $\gamma$ is the lower incomplete Gamma function defined as
\begin{equation*}
\gamma(z,a) := \int_0^a e^{-t} t^{z-1}\,\mathrm{d}t,
\end{equation*}
for $z \in \C$ and $a \ge0$. Hence, in the following, we shall take $Z_{R,n} := \sqrt{ \frac{\gamma(n+1,R^2)}{n!} }$. Therefore, the associated eigenvalues are
\begin{equation*}
\lambda_n^R := \int_{\mathcal{B}_R} | \phi_n(z)|^2 \,\d \ell(z) = Z_{R,n}^2 =  \frac{\gamma(n+1,R^2)}{n!}.
\end{equation*}
As is expected, $0\le\lambda_n^R \le 1$ for any $n\in\N, R \ge 0$, and $\lambda_n^R \xrightarrow[R \rightarrow \infty]{} 1$ for any $n \in \N$.
\\

\noindent
Now that we have specified the eigenvectors and associated eigenvalues, the simulation of the Ginibre process on a compact is that of the determinantal point process with kernel given by \eqref{eq;ginibrecompact}. Therefore, Algorithm~\ref{algo:simugen} fully applies. The time-consuming step of the algorithm will be the simulation of the Bernoulli random variables. Recall that the cumulative distribution function of $T = \sup \{ n \in \N_* \ / \ B_n = 1 \}$ is given by \eqref{eq:repartitionginibre} which in our case is equal to
\begin{equation*}
F(m) =  \sum_{n \le m} \frac{\gamma(n+1,R^2)}{n!} \prod_{i = n+1}^\infty \frac{\Gamma(i+1,R^2)}{i!},
\end{equation*}
for $m \in \N_*$. 
\\

\noindent
We remark that we can not simulate the Ginibre point process restricted to a compact  in the same way as in the previous subsection. Indeed, taking a $N \times N$ matrix with complex gaussian entries, and conditioning on the points being in $\mathcal{B}_{R}$ yields a determinantal point process with kernel,
\begin{equation*}
K_R(z_1,z_2) =  \sum_{n =0}^{N-1} \lambda_n^R\phi_n^R(z_1)\overline{\phi_n^R(z_2)},
\end{equation*}
which is not our target point process, as the sum is truncated at $N$. Therefore, the method developed in the previous subsection does not apply here. Hence, the algorithm is twofold, and the first step goes as follows:
\begin{algorithm}[h]
\caption{Simulation of the Ginibre process on a compact subset (Step $1$)}
 \begin{algorithmic}[H]
 \label{algo:simucompact1}
   \STATE {\bf evaluate numerically } $R \leftarrow \prod_{i\ge 1} \frac{\Gamma(i+1,R^2)}{i!}$, for example by calculating $\displaystyle e^{\sum_{i= 1}^N \ln(\frac{\Gamma(i+1,R^2)}{i!}) }$, where $N$ is chosen such that $\ln(\frac{\Gamma(N+1,R^2)}{N!}) < \epsilon$, $\epsilon > 0$ given by the user. \;
   \STATE {\bf sample } $U \leftarrow \mathcal{U}([0,1])$ according to a uniform distribution on $[0,1]$. \;
    \STATE $m \leftarrow 0$
   \WHILE {$U < R$}
   \STATE $ m \leftarrow m + 1$ \;
   \STATE $R \leftarrow \frac{m! \gamma(m+1,R^2)}{\gamma(m,R^2) \Gamma(m+1,R^2)}R$  \; 
   \ENDWHILE
\FOR { $i=0 \rightarrow m-1$ }
 \STATE $B_i \leftarrow \mathrm{Be}(\frac{\gamma(i+1,R^2)}{i!})$, where here $\mathrm{Be}(\lambda)$ is an independent drawing of a Bernoulli random variable of parameter $\lambda \in [ 0,\, 1]$
\ENDFOR
\IF { $m > 0$ }
  \RETURN $\{B_0,\dots,B_{m-1}, 1\}$ \;
\ENDIF
\IF { $m = 0$ }
  \RETURN $\{1\}$ \;
\ENDIF
 \end{algorithmic}
\end{algorithm}
\\

\begin{rem}
The series $\prod_{i\ge n} \frac{\Gamma(i+1,R^2)}{i!}$, for $n \in \N_*$ converges since it is equal to $\prod_{i\ge n} (1- \lambda_i^R)$ which is convergent. Indeed, $\sum_{i \ge 0} \lambda_i^R < \infty$ since the considered operator is locally trace-class.
\end{rem}

\noindent
We write  $\{B_0,\dots,B_{m-1}, 1\}$ for the value returned by the previous algorithm, with the convention that $\{B_0,\dots,B_{m-1},1\} = \{1\}$ if $m = 0$. Then by Theorem~\ref{thm:thm7}, the law of the Ginibre point process on a compact is the same as that of the determinantal point process of kernel 
\begin{equation*}
K(z_1,z_2) = \sum_{k = 0}^m B_k \phi_n(z_1)\overline{\phi_n(z_2)},\quad z_1, z_2 \in \C.
\end{equation*}

Now, we move onto the second part of the algorithm, which is this time straightforward as it suffices to follow Section~\ref{sec:algo} closely. 
\begin{algorithm}[h]
\caption{Simulation of the Ginibre process on a compact subset (Step $2$)}
 \begin{algorithmic}[H]
 \label{algo:simucompact2}
   \STATE {\bf define } $\phi_k(z) := \frac{1}{\pi  \gamma(k+1,R^2)}e^{- \frac{1}{2 } | z |^2} {z}^k $, for $z \in \mathcal{B}_R$ and $0\le k \le m$. \;
   \STATE {\bf define } $\bold v(z) := (\phi_{i_0}(z),\dots,\phi_{i_{k}}(z),\phi_{m}(z)  ) $, for $z \in \mathcal{B}_a$, and where $\{ i_0,\dots,i_{k}\} = \{ 0 \le i \le m-1 \ : \ B_i = 1 \}$ \;
   \STATE $N \leftarrow k+2$ \;
   \STATE {\bf sample } $X_N$ from the distribution with density $p_N(x) = \| \bold v(x) \|^2 /N$, $\ x \in \Lambda$ \;
   \STATE  $\bold e_1 \leftarrow \bold v(X_N) / \| \bold v(X_N) \|$
   \FOR {$i=N-1 \to 1$}
   \STATE {\bf sample } $X_i$ from the distribution with density \; 
   \begin{equation*}
   p_i(x) = \frac{1}{i} \Big[ \| \bold v(x) \|^2 - \sum_{j = 1}^{N-i} | \bold e_j^* \bold v(x) |^2 \Big]
   \end{equation*}
   \STATE $\bold w_i \leftarrow \bold v(X_i) - \sum_{j=1}^{N-i} \left(\bold e_j^* \bold v(X_i)\right) \bold e_j,\quad \bold e_{N-i+1} \leftarrow \bold w_i / \| \bold w_i \|$
   \ENDFOR
   \RETURN $(X_1,\dots,X_N)$
 \end{algorithmic}
\end{algorithm}
\\

\noindent
We end this subsection by mentioning the difficulties arising in the simulation under the density $p_i$, $1 \le i \le N-1$. As is remarked in \cite{LMa}, in the general case, we have no choice but to simulate by rejection sampling and the Ginibre point process is no different (except in the case $i=N-1$ where $p_i$ is the density of a gaussian random variable). Therefore in practice, we draw a uniform random variable $u$ on $\mathcal{B}_a$ and choose $p_i(u)/\sup_{y \in \mathcal{B}_a} p_i(y)$. Note that the authors in \cite{LMa} give a closed form bound on $p_i$ which is given by
\begin{equation}
\label{eq:boundpi}
p_i(x) \le \frac{1}{i} \min_{i+1 \le k \le N} \left( K^N(x,x) - \frac{| K^N(x,X_k)|^2}{K^N(X_k,X_k)} \right),
\end{equation}
where $X_{i+1},\dots,X_N$ is the result of the simulation procedure up to step $i$. In practice however, the error made in the previous inequality is not worth the gain made by not evaluating $\sup_{y \in \mathcal{B}_a} p_i(y)$. Therefore, in our simulations, we have chosen not to use \eqref{eq:boundpi}.

\subsection{Truncated Ginibre process on a compact subset}
In this subsection, we begin by studying the truncated Ginibre point process on a compact subset, and specifically discuss the optimal choice of the compact subset onto which we project. We begin by studying the general projection of the truncated Ginibre process onto a centered ball of radius $R \ge 0$ which is again a determinantal point process whose law can be explicited. To that end, we wish to study $K_{R}^N :=P_{\mathcal{B}_R}K^N P_{\mathcal{B}_R}$ of the integral operator $K$ onto $\mathrm{L}^2(\mathcal{B}_R, \d \ell)$. The associated kernel is given by 
\begin{equation}
\label{eq:truncatedproj}
K_R^N(z_1,z_2) =  \sum_{n = 0}^{N-1}  \lambda_n^R\phi_n^R(z_1)\overline{\phi_n^R(z_2)},
\end{equation}
for $z_1, z_2 \in \mathcal{B}_R$. The question of the Janossy densities of the associated determinantal process is not as trivial as the non-projected one. Indeed, $\mu_R^N$ does not have $N$ points almost surely. However, it is known that it has less than $N$ points almost surely (see e.g. \cite{Sa}). Therefore, it suffices to calculate the Janossy densities $j^0_R,\dots,j_R^N$ to characterize the law of $\mu_R^N$. These are given by the following proposition:
\begin{prop}
The point process $\mu_R^N$ with kernel given by \eqref{eq:truncatedproj} has less than $N$ points almost surely, and its Janossy densities are given by 
\begin{multline*}
j_R^k(z_1,\dots,z_k)=\frac{1}{\pi^k}  \prod_{p=0}^{k-1} \frac{1}{ p! } e^{- \sum_{p=1}^k |z_p|^2} \prod_{1 \le i < j \le k} | z_i - z_j |^2 
\\
\sum_{\{i_1,\dots,i_k\} \subset \{1,\dots, N\}} 
 | s_{\lambda(i_1,\dots,i_k)}(z_1,\dots,z_k)|^2,
\end{multline*}
for $0 \le k \le N$ and $z_1,\dots,z_k \in \mathcal{B}_R$.

\end{prop}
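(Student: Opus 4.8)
The plan is to invoke the general Janossy-density formula \eqref{eq:4} of Theorem~\ref{thm:existence}, specialised to the finite-rank operator $T=K_R^N$ with $\Lambda=\mathcal{B}_R$. First I would record that $K_R^N$ has rank at most $N$ on $L^2(\mathcal{B}_R,\d\ell)$, its nonzero eigenvalues being $\lambda_0^R,\dots,\lambda_{N-1}^R\in(0,1)$ with the orthonormal eigenfunctions $\phi_0^R,\dots,\phi_{N-1}^R$ appearing in \eqref{eq:truncatedproj}. By Theorem~\ref{thm:thm7} the number of points is a sum of $N$ Bernoulli variables, hence at most $N$ almost surely; this gives the first assertion and shows $j_R^k\equiv 0$ for $k>N$. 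For $0\le k\le N$ the Fredholm determinant is the finite product $\Det(\I-K_R^N)=\prod_{n=0}^{N-1}(1-\lambda_n^R)$, while \eqref{eq:jdecomp} yields the explicit kernel
\begin{equation*}
J[\mathcal{B}_R](z_1,z_2)=\sum_{n=0}^{N-1}\frac{\lambda_n^R}{1-\lambda_n^R}\,\phi_n^R(z_1)\overline{\phi_n^R(z_2)}=\sum_{n=0}^{N-1}\frac{1}{1-\lambda_n^R}\,\phi_n(z_1)\overline{\phi_n(z_2)},
\end{equation*}
the second equality using $\phi_n^R=(\lambda_n^R)^{-1/2}\phi_n$ on $\mathcal{B}_R$.

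The heart of the argument is the evaluation of $\det\big(J[\mathcal{B}_R](z_p,z_q)\big)_{1\le p,q\le k}$. Writing this kernel as the product of the $k\times N$ matrix with entries $(1-\lambda_n^R)^{-1/2}\phi_n(z_p)$ and its conjugate transpose, I would apply the Cauchy--Binet formula. This expands the determinant as a sum over $k$-element subsets $\{i_1,\dots,i_k\}\subset\{0,\dots,N-1\}$ of the squared modulus of the minor $\det\big(\phi_{i_q}(z_p)\big)_{1\le p,q\le k}$, each weighted by $\prod_r(1-\lambda_{i_r}^R)^{-1}$. Pulling the Gaussian factor and the normalising constants $(\pi\, i_r!)^{-1/2}$ out of this minor leaves a generalised Vandermonde determinant $\det\big(z_p^{\,i_q}\big)$, which by the classical bialternant (Weyl) formula equals the Schur polynomial $s_{\lambda(i_1,\dots,i_k)}(z_1,\dots,z_k)$ times the ordinary Vandermonde determinant $\prod_{p<q}(z_q-z_p)$. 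Taking moduli squared produces both the factor $\prod_{i<j}|z_i-z_j|^2$ and the $|s_{\lambda(i_1,\dots,i_k)}|^2$ of the claim, with the $\frac{1}{\pi^k}$ coming from the constants.

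Finally I would multiply back by $\Det(\I-K_R^N)$ and collect: the Gaussian factors assemble into $e^{-\sum_p|z_p|^2}$, and the global weight $\prod_{n=0}^{N-1}(1-\lambda_n^R)$ cancels against $\prod_r(1-\lambda_{i_r}^R)^{-1}$ to leave $\prod_{n\notin\{i_1,\dots,i_k\}}(1-\lambda_n^R)$. The main obstacle I anticipate is precisely this bookkeeping step: the residual per-subset eigenvalue weights together with the factorial factors $\prod_r 1/i_r!$ differ from subset to subset, so matching the stated global prefactor $\prod_{p=0}^{k-1}1/p!$ requires that these per-subset weights be absorbed into the paper's normalisation convention for $s_{\lambda(i_1,\dots,i_k)}$; reconciling this convention, and verifying that the bialternant identifies the minor with the correct partition $\lambda(i_1,\dots,i_k)$, is where care is needed. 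The Cauchy--Binet expansion itself is routine once the finite-rank factorisation is in place, and the cases $k=0$ (giving $j_R^0(\emptyset)=\prod_{n=0}^{N-1}(1-\lambda_n^R)$) and $k>N$ serve as consistency checks.
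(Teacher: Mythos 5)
Your strategy coincides with the paper's own proof: specialise \eqref{eq:4} to $T=K_R^N$, compute $\Det(\I-K_R^N)=\prod_{n=0}^{N-1}(1-\lambda_n^R)$, write $J[\mathcal{B}_R]$ via \eqref{eq:jdecomp} as a rank-$N$ product of a $k\times N$ matrix with its adjoint, expand by Cauchy--Binet, and identify each minor as a generalised Vandermonde determinant, i.e.\ an ordinary Vandermonde times a Schur polynomial. The one step you defer --- reconciling the per-subset weights with the stated global prefactor $\prod_{p=0}^{k-1}1/p!$ --- is precisely where you should not expect things to work out: those weights are \emph{not} absorbed by any normalisation convention, and they do not cancel.

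Concretely, since $\phi_{i_h}^R(z)=(\pi\gamma(i_h+1,R^2))^{-1/2}e^{-|z|^2/2}z^{i_h}$ and the $J$-eigenvalue is $\gamma(i_h+1,R^2)/\Gamma(i_h+1,R^2)$, the Cauchy--Binet term indexed by $S=\{i_1,\dots,i_k\}$ carries the factor $\prod_{h=1}^{k}\bigl(\pi\,\Gamma(i_h+1,R^2)\bigr)^{-1}$; after multiplying by the hole probability this leaves $\prod_{n\notin S}(1-\lambda_n^R)\cdot\prod_{h=1}^{k}(i_h!)^{-1}$ \emph{inside} the sum over subsets, a quantity depending on both $S$ and $R$. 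The paper's proof silently replaces $\prod_{h}\Gamma(i_h+1,R^2)$ by $\prod_{p=0}^{k-1}\Gamma(p+1,R^2)$, which is only legitimate for the single subset $\{0,\dots,k-1\}$; the stated formula is therefore correct only for $k=N$ (and $k=0$). A sanity check with $N=2$, $k=1$ makes this visible: the true Janossy density is $\frac{1}{\pi}e^{-|z|^2}\bigl[(1-\lambda_1^R)+(1-\lambda_0^R)|z|^2\bigr]$, whereas the displayed formula gives $\frac{1}{\pi}e^{-|z|^2}(1+|z|^2)$. So if you carry out your plan carefully you will land on the corrected expression with the subset-dependent weights $\prod_{n\notin S}(1-\lambda_n^R)\prod_{h}(i_h!)^{-1}$ rather than on the proposition as printed; the gap is not in your method but in the cancellation you were hoping for (and which the paper asserts without justification). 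Minor additional point: the indexing of the subsets should be $\{0,\dots,N-1\}$, not $\{1,\dots,N\}$, for the Schur polynomial attached to the top subset to reduce to $1$.
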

\begin{proof}
By formula \eqref{eq:jdecomp}, the operator $J^N[\mathcal{B}_R]$ associated to $\mu^N$ can be decomposed as:
\begin{equation*}
J^N[\mathcal{B}_R] (z_1,z_2) =  \sum_{n = 0}^{N-1} \frac{\gamma(n+1,R^2)}{\Gamma(n+1,R^2)} \,\phi_n^R(z_1)\overline{\phi_n^R(z_2)}, \quad z_1, z_2 \in \mathcal{B}_R,
\end{equation*}
where $\Gamma$ is the upper incomplete Gamma function defined as
\begin{equation*}
\Gamma(z,a) := \int_a^\infty e^{-t} t^{z-1}\,\mathrm{d}t,
\end{equation*}
for $z \in \C$ and $a \ge0$, which by definition verifies $\gamma(\cdot, a) + \Gamma(\cdot,a) = \Gamma(\cdot)$ for all $a \ge 0$ ($\Gamma(\cdot)$ is the usual Gamma function). Here, we note that $j^N_R$ can be calculated as previously as the associated determinant is again a Vandermonde determinant. More precisely, we obtain
\begin{equation*}
\det \left( J^N [ \mathcal{B}_R ] (z_i, z_j) \right)_{1 \le i, j \le N } = \frac{1}{\pi^N} \prod_{p=0}^{N-1} \frac{1}{ \Gamma(p+1, R^2)} \,
e^{- \sum_{p=1}^N |z_p|^2}
 \prod_{1\leq p<q \leq N}
| z_p - z_q|^2 , 
\end{equation*}
for $z_1,\dots,z_N \in \mathcal{B}_R$. Moreover, the hole probability, i.e. the probability of having no points in $\mathcal{B}_R$, is equal to 
\begin{equation}
\label{holeproba}
\mathrm{Det} \left( \I - K_R^N \right) = \prod_{ n=0}^{N-1} (1-\lambda_n^R) = \prod_{n =0}^{N-1} \frac{\Gamma(n+1, R^2)}{n!}.
\end{equation}
Hence, we obtain the following expression for the $N$-th Janossy density:
\begin{equation*}
j_R^N (z_1,\dots,z_N) =  \frac{1}{\pi^N} 
 \prod_{p=0}^{N-1} \frac{1}{p!} \,
e^{- \sum_{p=1}^N |z_p|^2}
 \prod_{1\leq p<q \leq N}
| z_p - z_q|^2  ,
\end{equation*}
for $z_1,\dots,z_N \in \mathcal{B}_R$. Now, if we take $k < N$, we have again
\[
J[D](z_1,\dots,z_k)=A^N(z_1,\dots,z_k)A^N(z_1,\dots,z_k)^*,
\]
where this time, $A^N(z_1,\dots,z_k) $ is a rectangular $k\times N$ matrix. Hence, by application of the Cauchy-Binet formula:
\[
\mathrm{det}\,J[D](z_1,\dots,z_k)=\sum_{\{i_1,\dots,i_k\} \subset \{1,\dots, N\}} | \mathrm{det}\, A^{i_1,\dots,i_k}(z_1,\dots,z_k)|^2,
\]
where we have for $1\le p,h \le k$,
\[
A^{i_1,\dots,i_k}_{ph}(z_1,\dots,z_k) := \sqrt{\frac{\gamma(n+1,R^2)}{\Gamma(n+1,R^2)}} \phi_{i_h}^{R}(z_p),
\]
which is a square matrix. We now consider fixed ${\{i_1,\dots,i_k\} \subset \{1,\dots, N\}}$ and wish to evaluate $| \mathrm{det}\, A^{i_1,\dots,i_k}(z_1,\dots,z_k)|^2$. In fact, we observe that
\[
| \mathrm{det}\, A^{i_1,\dots,i_k}(z_1,\dots,z_k)|^2 = \prod_{p=0}^{k-1} \frac{1}{\pi \Gamma(p+1, R^2) }
e^{- \sum_{p=1}^k |z_p|^2}
\left|
V_{i_1,\dots,i_k}(z_1,\dots,z_k)
\right|^2,
\]
where 
\[
V_{i_1,\dots,i_k}(z_1,\dots,z_k) := \mathrm{det}\left( \left(z_h^{i_p}\right)_{1\le p,h \le k} \right)
\]
is known in the literature as the generalized Vandermonde determinant. Here, $V_{1,\dots,k}(z_1,\dots,z_k)$ is the classical Vandermonde determinant, and in the general case,  a certain number of rows from the matrix have been deleted. The generalized Vandermonde determinant is known to factorize into the classical Vandermonde determinant and what is defined to be a Schur polynomial. To be more precise,
\[
V_{i_1,\dots,i_k}(z_1,\dots,z_k)  = V_{1,\dots,k}(z_1,\dots,z_k)s_{\lambda(i_1,\dots,i_k)}(z_1,\dots,z_k), 
\]
where ${\lambda(i_1,\dots,i_k)}:= (i_k-k+1,\dots,i_2-1,i_1)$, and $s_{\lambda}$ is the Schur polynomial, which is known to be symmetric, and is a sum of monomials, see e.g. \cite{Ha}. To summarize, we have 
\begin{multline*}
\mathrm{det}\,J[D](z_1,\dots,z_k)=\left( \prod_{p=0}^{k-1} \frac{1}{\pi \Gamma(p+1,R^2) }\right) e^{- \sum_{p=1}^k |z_p|^2} \prod_{1 \le i < j \le k} | z_i - z_j |^2 \\
 \sum_{\{i_1,\dots,i_k\} \subset \{1,\dots, N\}} 
 | s_{\lambda(i_1,\dots,i_k)}(z_1,\dots,z_k)|^2.
\end{multline*}
Then, by \eqref{holeproba}, we find
\begin{multline}
\label{eq:detjginibre}
j_R^k(z_1,\dots,z_k)=\frac{1}{\pi^k}  \prod_{p=0}^{k-1} \frac{1}{ p! } e^{- \sum_{p=1}^k |z_p|^2} \prod_{1 \le i < j \le k} | z_i - z_j |^2 
\\
\sum_{\{i_1,\dots,i_k\} \subset \{1,\dots, N\}} 
 | s_{\lambda(i_1,\dots,i_k)}(z_1,\dots,z_k)|^2,
\end{multline}
for $z_1,\dots, z_k \in \mathcal{B}_R$. 
\end{proof}

Next, we wish to determine the optimal $R \ge 0$ onto which we project the truncated Ginibre process. In regards to this question, we recall that the particle density $\rho_1$ of the general Ginibre process is constant, and 
\begin{equation*}
\rho_1(z) = K(z,z) = \frac{1}{\pi},
\end{equation*}
for $z \in \C$. However, the particle density of the truncated Ginibre process is not constant. If we denote by $\rho_n^N$ the $n$-th correlation function of $\mu^N$, then we have
\begin{equation*}
\rho_1^N (z) = \frac{1}{\pi} e^{-\frac{1}{2}|z|^2} \sum_{k = 0}^{N-1} \frac{ |z|^{2 k}}{k!}   ,
\end{equation*}
for $z \in \C$. As can be checked easily, we have $\int_\C \rho_1^N(z)\,\d z = N$ as well as 
\begin{equation}
\label{eq:rhoinfpi}
\rho_1^N(z) \le \frac{1}{\pi},\quad  \forall z \in \C,
\end{equation}
and in fact it is known that $\rho_1^N(\sqrt{N} z) \xrightarrow[N \rightarrow \infty]{} \frac{1}{\pi} 1_{| z | \le 1}$, which is known as the circular law in stochastic matrix theory. It therefore appears that it is optimal to project onto $\mathcal{B}_{\sqrt{N}}$. We wish to get more precise results on the error we are making by truncating the point process to $\mathcal{B}_R$. To that end, we recall the following bounds on $\rho_1^N$ which were obtained in \cite{Ga}. We recall their proof for convenience, as our bounds differ slightly from the ones obtained there.
\begin{prop}
\label{prop:densiteginibre}
For $|z|^2 < N+1$, we have
\begin{equation*}
\frac{1}{\pi} - \rho_1^N (z) \le \frac{1}{\pi} e^{-|z|^2} \frac{|z|^{2N}}{N!} \frac{N+1}{N+1 - |z|^2}.
\end{equation*}
For $|z|^2 \ge N$, we have
\begin{equation*}
\rho_1^N (z) \le \frac{1}{\pi} e^{-|z|^2} \frac{|z|^{2N}}{N!} \frac{N}{ |z|^2 - N}.
\end{equation*}
\end{prop}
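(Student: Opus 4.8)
The plan is to exploit the explicit series representation of $\rho_1^N$ together with the fact that $1/\pi$ is the full exponential series. Writing $x := |z|^2$, recall that $\rho_1^N(z) = \frac{1}{\pi} e^{-x} \sum_{n=0}^{N-1} \frac{x^n}{n!}$, while $\frac{1}{\pi} = \frac{1}{\pi} e^{-x}\, e^{x} = \frac{1}{\pi} e^{-x} \sum_{n=0}^{\infty} \frac{x^n}{n!}$ by the Taylor expansion of the exponential. Thus both quantities to be bounded are, up to the common prefactor $\frac{1}{\pi} e^{-x}$, either the tail or a truncation of the exponential series, and each bound will follow by comparing the relevant partial sum to a convergent geometric series.

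For the first inequality, subtracting the two identities gives
\[
\frac{1}{\pi} - \rho_1^N(z) = \frac{1}{\pi} e^{-x} \sum_{n \ge N} \frac{x^n}{n!}.
\]
Factoring out the leading term $\frac{x^N}{N!}$, the term indexed by $n = N+j$ equals $\frac{x^N}{N!} \cdot \frac{x^j}{(N+1)(N+2)\cdots(N+j)}$, which is bounded above by $\frac{x^N}{N!}\left(\frac{x}{N+1}\right)^{j}$. Summing the geometric series with ratio $\frac{x}{N+1}$, which converges precisely when $x < N+1$, produces the factor $\frac{N+1}{N+1-x}$ and hence the claimed bound.

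For the second inequality I would bound $\rho_1^N$ directly rather than through its complement. Reindexing the partial sum $\sum_{n=0}^{N-1} \frac{x^n}{n!}$ from the top by setting $k = N-n$, the term becomes $\frac{x^N}{N!} \cdot \frac{N(N-1)\cdots(N-k+1)}{x^{k}} \le \frac{x^N}{N!}\left(\frac{N}{x}\right)^{k}$ for $1 \le k \le N$. Extending the sum to all $k \ge 1$ and summing the geometric series with ratio $\frac{N}{x}$, convergent exactly when $x > N$, gives the factor $\frac{N}{x-N}$; multiplying by $\frac{1}{\pi} e^{-x}$ yields the stated estimate (at $x=N$ the right-hand side is $+\infty$, so the range $x \ge N$ is harmless).

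The computations are entirely elementary; the only point requiring mild care is selecting the correct uniform bound on the ratio of consecutive terms in each regime — namely $\frac{x}{N+1}$ for the tail and $\frac{N}{x}$ for the head — so that each geometric comparison is valid on exactly the stated range of $|z|^2$. I do not expect any deeper obstacle.
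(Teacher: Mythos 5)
Your proof is correct and follows essentially the same route as the paper's: both arguments factor out the term $\frac{|z|^{2N}}{N!}$ and compare the remaining tail (resp.\ head) of the exponential series to a geometric series via the bounds $\frac{(N+j)!}{N!} \ge (N+1)^{j}$ and $\frac{N!}{(N-k)!} \le N^{k}$. No substantive difference to report.
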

\begin{proof}
By using $\frac{(k+N)!}{N!} \ge (N+1)^k$, for $k, N \in \N$, we obtain for  $|z|^2 < N+1$,
\begin{align*}
\rho_1^N (z) = \frac{1}{\pi} -  \frac{1}{\pi} e^{-|z|^2} \sum_{k=N}^\infty \frac{|z|^{2k}}{k!} &= \frac{1}{\pi} -  \frac{1}{\pi} e^{-|z|^2} \frac{|z|^{2N}}{N!} \sum_{k=0}^\infty \frac{|z|^{2k} N!}{(k+N)!}\\
	&\le \frac{1}{\pi} -  \frac{1}{\pi} e^{-|z|^2} \frac{|z|^{2N}}{N!} \frac{1}{1-\frac{|z|^2}{N+1}}.
\end{align*}
The proof of the second inequality is along the same lines, except that we use $ \frac{N!}{(n-k)!} \le N^k$, for $k, N \in \N$.
\end{proof}
As was noticed in \cite{Ga}, if we set $|z| = \sqrt{N} + u$, for $-1 \le u \le 1$, both of the right hand sides of the inequalities in Proposition~\ref{prop:densiteginibre} tend to 
\begin{equation*}
\frac{1}{2 \sqrt{2} u \pi^{3/2} } e^{-2 u^2},
\end{equation*}
as $N$ tends to infinity. This is obtained by standard calculations involving in particular the Stirling formula. That is to say, for $|z| \le \sqrt{N}$, and $z = \sqrt{N} - u$, 
\begin{equation}
\label{eq:borneinf}
\rho_1^N (\sqrt{N} - u) \ge \frac{1}{\pi} - \frac{1}{2 \sqrt{2} u \pi^{3/2} } e^{-2 u^2},
\end{equation}
as well as for $|z| \ge \sqrt{N}$, and $z = \sqrt{N} + u$
\begin{equation}
\label{eq:bornesup}
\rho_1^N (\sqrt{N} + u) \le \frac{1}{2 \sqrt{2} u \pi^{3/2} } e^{-2 u^2},
\end{equation}
as $N$ tends to infinity. These bounds exhibit the sharp fall of the particle density around $|z| = \sqrt{N}$.
\begin{figure}[h!]
  \caption{$\rho_1^N (|z|)$ for $N = 600$ and $|z|$ around $\sqrt{N}$ (blue). Upper and lower bounds obtained in (\ref{eq:borneinf}) and (\ref{eq:bornesup}) (green).}
  \centering
    \includegraphics[width=\textwidth]{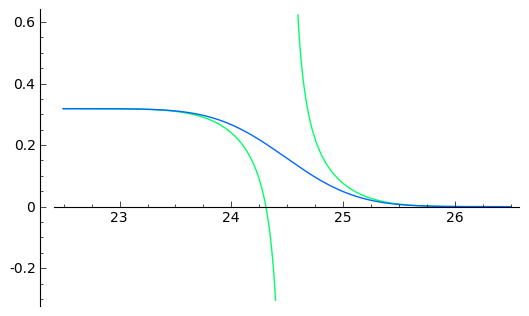}
\end{figure}
\\

\noindent
The previous results yield in particular the next proposition (see \cite{Ga}). We give here its proof as it was omitted in \cite{Ga}.
\begin{prop}
\label{prop:eqdelta}
Let us write $\displaystyle \delta (N) := \int_{|z| > \sqrt{N}} \rho_1^N(z) \,\d \ell(z)$. Then, we have 
\begin{equation}
\label{eq:erreurapprox}
\delta (N) \sim \sqrt{\frac{N}{ 2 \pi}},
\end{equation}
as $N \rightarrow \infty$.
\end{prop}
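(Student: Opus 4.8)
The plan is to reduce $\delta(N)$ to an elementary one-dimensional quantity and then extract its asymptotics via Stirling's formula. First I would pass to polar coordinates. Since $\rho_1^N(z) = \frac{1}{\pi} e^{-|z|^2}\sum_{k=0}^{N-1}\frac{|z|^{2k}}{k!}$ depends only on $|z|$, writing $z = r e^{i\theta}$ and $\d\ell(z) = r\,\d r\,\d\theta$ gives
\begin{equation*}
\delta(N) = 2\int_{\sqrt{N}}^\infty e^{-r^2}\sum_{k=0}^{N-1}\frac{r^{2k}}{k!}\,r\,\d r.
\end{equation*}
The substitution $s = r^2$ then turns this into
\begin{equation*}
\delta(N) = \int_N^\infty e^{-s}\sum_{k=0}^{N-1}\frac{s^k}{k!}\,\d s,
\end{equation*}
which is a finite sum of upper incomplete Gamma functions.

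Next I would compute this sum in closed form. For each integer $k$ one has the classical identity $\int_N^\infty e^{-s}\frac{s^k}{k!}\,\d s = \frac{\Gamma(k+1,N)}{k!} = e^{-N}\sum_{j=0}^k \frac{N^j}{j!}$. Substituting and exchanging the order of the two finite summations, the inner count of admissible indices $\{k : j \le k \le N-1\}$ gives
\begin{equation*}
\delta(N) = e^{-N}\sum_{j=0}^{N-1}(N-j)\frac{N^j}{j!}.
\end{equation*}
Splitting $(N-j)$ and reindexing the term $j\,N^j/j! = N\,N^{j-1}/(j-1)!$ produces two nearly identical sums that telescope, leaving the single surviving term
\begin{equation*}
\delta(N) = e^{-N}\,\frac{N^N}{(N-1)!}.
\end{equation*}
Equivalently, one recognizes $\delta(N) = \E[(N-X)^+]$ for $X$ a Poisson variable of mean $N$, which equals half its mean absolute deviation and admits the same closed form.

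Finally, Stirling's formula $(N-1)! \sim \sqrt{2\pi N}\,N^{N-1}e^{-N}$ yields immediately $\delta(N) \sim N^N e^{-N}/(\sqrt{2\pi N}\,N^{N-1}e^{-N}) = \sqrt{N/(2\pi)}$, as claimed. I expect the only genuine step to be the derivation of the closed form $\delta(N) = e^{-N}N^N/(N-1)!$: once the incomplete-Gamma identity is in place, the obstacle is purely the bookkeeping of swapping and telescoping the double sum, after which the Stirling asymptotics are entirely routine. Note that the pointwise bounds of Proposition~\ref{prop:densiteginibre} are not needed here; the exact evaluation is both cleaner and sharper than integrating those estimates over $\{|z|>\sqrt{N}\}$ would be.
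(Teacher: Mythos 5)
Your proof is correct, and it takes a genuinely different route from the one in the paper. The paper obtains \eqref{eq:erreurapprox} by integrating the pointwise estimates of Proposition~\ref{prop:densiteginibre} in their limiting form \eqref{eq:borneinf}--\eqref{eq:bornesup}; because the bound $\frac{1}{2\sqrt{2}u\pi^{3/2}}e^{-2u^2}$ is not integrable at $u=0$, this forces the introduction of an auxiliary cut-off $a$, a separate crude estimate on the annulus $\sqrt{N}\le|z|\le\sqrt{N}+a$, and a final squeeze involving the exponential integral $E_i(-2a^2)$. You instead evaluate $\delta(N)$ \emph{exactly}: passing to polar coordinates, substituting $s=r^2$, using $\Gamma(k+1,N)=k!\,e^{-N}\sum_{j=0}^{k}N^j/j!$, and telescoping the resulting double sum gives the closed form $\delta(N)=e^{-N}N^N/(N-1)!$ (equivalently $\E[(N-X)^+]$ for $X$ Poisson of mean $N$), after which Stirling's formula is immediate. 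Each step checks out (e.g.\ $\delta(1)=e^{-1}$, $\delta(2)=4e^{-2}$ agree with the formula). Your argument is shorter, entirely elementary, and strictly stronger than the statement, since it produces an identity from which the full asymptotic expansion of $\delta(N)$ could be read off; it also avoids the delicate bookkeeping near $u=0$ where the paper's squeeze is at its most fragile. What the paper's route buys in exchange is locality: the bounds of Proposition~\ref{prop:densiteginibre} describe where the mass outside $\mathcal{B}_{\sqrt N}$ sits (within an $O(1)$ neighbourhood of the circle $|z|=\sqrt N$), information that the exact global evaluation does not provide.
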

\begin{proof}
For any $0 < a < \sqrt{N}$, we define $\delta_a (N) := \int_{|z| > \sqrt{N} + a} \rho_1^N(z) \,\d \ell(z)$. As the bound obtained in \eqref{eq:bornesup} is not integrable at $u =0$, we bound the two different parts as follows:
\begin{equation*}
\delta(N) = \delta_a (N) + \int_{\sqrt{N} \le |z| \le \sqrt{N} + a} \rho_1^N(z) \,\d \ell(z) \le \delta_a (N) + 2 a \sqrt{N} + a^2,
\end{equation*}
where we have used \eqref{eq:rhoinfpi}. Now, applying \eqref{eq:bornesup} yields the following:
\begin{equation*}
\delta_a(N) \le \frac{1}{4} -\frac{1}{2} \sqrt{\frac{N}{2 \pi}} E_i(-2a^2),
\end{equation*}
where here $E_i(\cdot)$ stands for the exponential integral defined as
\begin{equation*}
E_i(x) := - \int_{-x}^\infty \frac{1}{t} e^{-t} \, \d t, \quad x \in \R_+^*. 
\end{equation*}
To sum up the calculations up to this point, we have for $N$ sufficiently large,
\begin{equation*}
\frac{1}{\sqrt{N}} \delta(N) \le \frac{a^2 + \frac{1}{4}}{\sqrt{N}} + 2 a -\frac{1}{2} \sqrt{\frac{1}{2 \pi}} E_i(-2a^2).
\end{equation*}
On the other hand, we also have
\begin{equation*}
\delta(N) = N - \int_{|z| \le \sqrt{N}} \rho_1^N(z) \,\d \ell(z) \ge  N - \int_{|z| \le \sqrt{N}-a} \rho_1^N(z) \,\d \ell(z) +a^2 - 2 a \sqrt{N},
\end{equation*}
thanks to \eqref{eq:rhoinfpi}. This time, we use \eqref{eq:borneinf} and obtain:
\begin{equation*}
\delta(N) \ge - \frac{1}{4} + \sqrt{N} \int_a^{\sqrt{N}} \frac{1}{\sqrt{2 \pi} u } e^{- 2 u^2} \,\d u,
\end{equation*}
which means that for sufficiently large $N$, 
\begin{equation*}
\delta(N) \ge  - \frac{1}{4} - \frac{1}{2} \sqrt{\frac{N}{2 \pi}} E_i(-2a^2).
\end{equation*}
Therefore, we have the following bounds, as $N \rightarrow \infty$.
\begin{equation*}
-\frac{1}{2} E_i(-2a^2) \le \sqrt{\frac{2 \pi}{N}} \delta(N) \le 2 \sqrt{2 \pi} a -\frac{1}{2} E_i(-2a^2).
\end{equation*}
However, since $E_i(-2a^2) \sim 2 \log a$ as $a$ tends to $0$, we have that $\sqrt{\frac{2 \pi}{N}} \delta(N) \rightarrow 1$ as $N \rightarrow \infty$ by taking a small enough $a$.
\end{proof}

\noindent
Proposition~\ref{prop:eqdelta} means that as $N \rightarrow \infty$, the average number of points falling outside of the $\mathcal{B}_{\sqrt{N}}$ is of the order of $\frac{1}{\sqrt{N}}$, as $N$ tends to infinity. Therefore, from now on, we will consider the truncated Ginibre process of rank $N$ projected onto $\mathcal{B}_{\sqrt{N}}$. Assume that we need to simulate $\mu^N$ on a compact subset. Then, we no longer control the number of points, i.e. there is again a random number of points in the compact subset, as seen in Figure~\ref{fig:truncatedginibre}.

\begin{figure}[h!]
  \caption{A realization of $\mu^N$ for $N = 200$ (blue circles) renormalized to fit in the circle of radius $1$ (in red)}
    \includegraphics[width=1.7\textwidth]{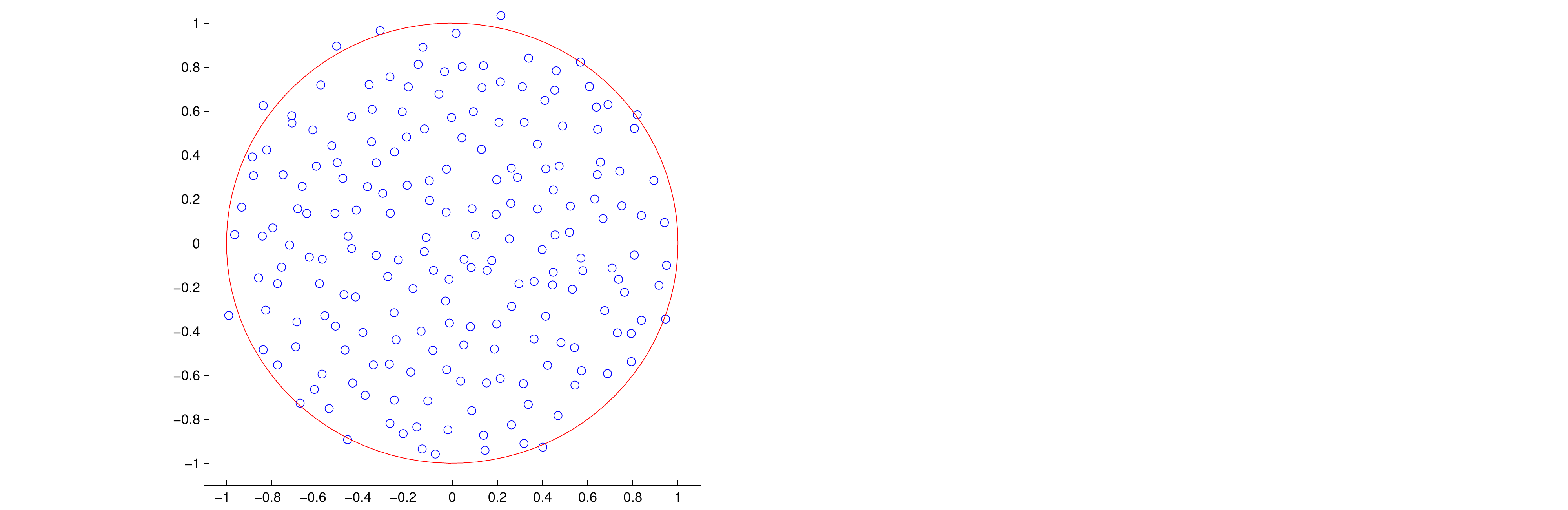}
    \label{fig:truncatedginibre}
      \centering
\end{figure}

Therefore, our additional idea is to condition the number of points on being equal to $N$. As we have calculated previously in Proposition~\ref{prop:eqdelta}, there is a number of points falling outside of the ball of radius $\mathcal{B}_{\sqrt{N}}$ which grows as $\frac{1}{\sqrt{N}}$ as $N$ goes to infinity. Since the projection onto $\mathcal{B}_{\sqrt{N}}$ of the truncated Ginibre process takes the determinantal form \eqref{eq:truncatedproj}, one can easily calculate the probability of all the points falling in $\mathcal{B}_{\sqrt{N}}$. Indeed, we have that
\begin{equation}
\label{eq:truncatedinball}
\PP_{\mu^N} ( \xi_{\mathcal{B}_{\sqrt{N}}^c} = \emptyset ) = \prod_{n = 0}^{N-1} \lambda_n^N =  \prod_{n = 0}^{N-1} \frac{\gamma(n+1,N)}{n!}.
\end{equation}
It can be shown that this probability tends to $0$ as $N$ tends to infinity. That is, if we are required to simulate the Ginibre process on a compact conditionally on it having $N$ points, the conditioning requires more and more computation time as $N$ tends to infinity. 
\\

\noindent
However, we are not forced to simulate the conditioning on there being $N$ points. Instead, we introduce a new kernel, as well as the associated point process. We set
\begin{equation}
\label{eq:truncatedcompact}
\tilde{K}^N (z_1,z_2) = \sum_{n = 0}^{N-1} \phi_n^N(z_1) \overline{\phi_n^N(z_2)}, \quad z_1, z_2 \in \mathcal{B}_R,
\end{equation}
and where $\phi_n^N$ corresponds to the function $\phi_n$ restricted to the compact $\mathcal{B}_{\sqrt{N}}$ (after renormalization). We emphasize that this is in fact $\mu^N |_{\mathcal{B}_{\sqrt{N}}}$ conditioned on there being $N$ points in the compact $\mathcal{B}_{\sqrt{N}}$, this result being due to Theorem~\ref{thm:thm7}. Moreover, the determinantal point process associated with this kernel benefits from the efficient simulations techniques developed in the previous subsection. Here, the fact that we can explicit the projection kernel associated with the conditioning is what ensures the efficiency of the simulation.
\\

\noindent
Let us start by proving that $\tilde{\mu}^N$, the associated determinantal process with kernel $\tilde{K}^N$, converges to $\mu$ weakly as $N$ tends to infinity. This is a consequence of Proposition~\ref{prop:convergencefaible}, as is proved in the following:
\begin{thm}
\label{thm:convergence}
We have that $\tilde{K}^N$ converges uniformly on compact subsets to $K$ as $N$ tends to infinity. As a consequence, the associated determinantal measures converge weakly to the determinantal point process of kernel $K$.
\end{thm}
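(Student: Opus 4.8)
The plan is to isolate the genuine content of the statement, which is the uniform convergence $\tilde K^N \to K$ on compacts: once this is in hand, together with the (easy) fact that each $\tilde K^N$ satisfies Hypothesis~\ref{hyp:condition_T}, the weak convergence of the measures follows by a direct invocation of Proposition~\ref{prop:convergencefaible}. I would therefore begin with the convergence of the kernels and treat the measures only at the end.

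First I would make $\tilde K^N$ explicit. Since $\phi_n^N(z) = (\gamma(n+1,N)/n!)^{-1/2}\phi_n(z)\1{z\in\mathcal{B}_{\sqrt N}}$ (this is the renormalization of the previous subsection with $R^2=N$), one has, for $z_1,z_2 \in \mathcal{B}_{\sqrt N}$,
\[
\tilde K^N(z_1,z_2) = \sum_{n=0}^{N-1}\frac{n!}{\gamma(n+1,N)}\,\phi_n(z_1)\overline{\phi_n(z_2)}.
\]
Fix a compact $\Lambda\subset\C$ and pick $M$ with $\Lambda\subset\mathcal{B}_M$; for $N$ large the indicators are identically $1$ on $\Lambda$, and using $\gamma(n+1,N)+\Gamma(n+1,N)=n!$ I would write, on $\Lambda\times\Lambda$,
\[
K(z_1,z_2)-\tilde K^N(z_1,z_2) = -\sum_{n=0}^{N-1}\frac{\Gamma(n+1,N)}{\gamma(n+1,N)}\phi_n(z_1)\overline{\phi_n(z_2)} + \sum_{n\ge N}\phi_n(z_1)\overline{\phi_n(z_2)}.
\]
The elementary bound $|\phi_n(z)|\le M^n/\sqrt{\pi n!}$ on $\mathcal{B}_M$ gives $|\phi_n(z_1)\overline{\phi_n(z_2)}|\le M^{2n}/(\pi n!)$, so the second (tail) sum is dominated by $\sum_{n\ge N}M^{2n}/(\pi n!)$, the remainder of the convergent series for $e^{M^2}/\pi$, hence tends to $0$ uniformly in $z_1,z_2\in\Lambda$.

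The main obstacle is the first sum, because the coefficients $\Gamma(n+1,N)/\gamma(n+1,N)$ are \emph{not} uniformly small in $n$: writing $\gamma(n+1,N)/n! = \PP(\mathrm{Poisson}(N)\ge n+1)$ shows that for $n$ near $N-1$ this ratio is bounded away from $0$ (it is close to $1/2$), so the coefficient stays of order one and a termwise uniform estimate is hopeless. I would resolve this by splitting the sum at a fixed threshold $L$ chosen independently of $N$. For $0\le n\le N-1$ the same Poisson identity gives $\gamma(n+1,N)/n!\ge\PP(\mathrm{Poisson}(N)\ge N)\ge 1/3$ for $N$ large, so $\Gamma(n+1,N)/\gamma(n+1,N)=n!/\gamma(n+1,N)-1\le 2$ uniformly; combined with the decay $M^{2n}/(\pi n!)$, the block $L<n\le N-1$ is at most $2\sum_{n>L}M^{2n}/(\pi n!)$, which is $<\epsilon$ for $L$ large regardless of $N$. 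For the finitely many remaining terms $0\le n\le L$, each coefficient $\Gamma(n+1,N)/\gamma(n+1,N)\to0$ as $N\to\infty$ (since $\Gamma(n+1,N)\to0$ while $\gamma(n+1,N)\to n!$), so this block is $<\epsilon$ once $N$ is large. This gives $\sup_{z_1,z_2\in\Lambda}|K-\tilde K^N|\to 0$, i.e. uniform convergence on compacts.

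It remains to feed this into Proposition~\ref{prop:convergencefaible}. Each $\tilde K^N$ is a continuous kernel, and as the orthogonal projection onto the $N$-dimensional span of the orthonormal family $(\phi_n^N)_{0\le n\le N-1}$ of $L^2(\C,\d\ell)$ it is symmetric, bounded, trace-class and has spectrum contained in $\{0,1\}\subset[0,1]$, so Hypothesis~\ref{hyp:condition_T} is met. With the uniform convergence just established, Proposition~\ref{prop:convergencefaible} applies directly and yields the weak convergence of $\tilde\mu^N$ to the Ginibre process $\mu$ of kernel $K$.
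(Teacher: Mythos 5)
Your proof is correct and follows essentially the same route as the paper's: the same decomposition of $K-\tilde K^N$ into a tail sum beyond $N$ (controlled by the remainder of the series for $e^{M^2}$) and a sum over $n<N$ whose coefficients are controlled via the probabilistic interpretation of the incomplete Gamma function. The only cosmetic difference is that where the paper invokes dominated convergence with the summable majorant $|A|^{2n}/\bigl(\pi\,\gamma(n+1,n+1)\bigr)$, you use an $\varepsilon/3$ split at a fixed threshold $L$ together with the uniform bound $\Gamma(n+1,N)/\gamma(n+1,N)\le 2$; both rest on the same estimate $\gamma(n+1,N)/n!=\PP(\mathrm{Poisson}(N)\ge n+1)$, and your explicit verification of Hypothesis~\ref{hyp:condition_T} for the projection kernels is a welcome addition the paper leaves implicit.
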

\begin{proof}
Take a compact subset $A$ of $\C$, and write $|A| := \sup \{ | z |, \ z \in A \}$. Then, for $z_1, z_2 \in A$,
\begin{multline*}
| K(z_1,z_2) - \tilde{K}^N(z_1,z_2) | \le \sum_{n=0}^{N-1} \frac{1}{\pi} |A|^{2 n} |\frac{1}{\gamma(n+1,N)} - \frac{1}{n!}  | 1_{\{ (z_1,z_2) \in (\mathcal{B}_N)^2\} } \\
	+ \sum_{n = N}^\infty \frac{1}{\pi n!} |A|^{2 n} + \sum_{n=0}^{N-1} \frac{1}{\pi n!} | A |^{2 n} 1_{\{(z_1,z_2) \notin (\mathcal{B}_N)^2\}}.
\end{multline*}
The second term tends to zero as the remainder of a convergent series, and the third term also tends to zero by dominated convergence. Concerning the first term, we need sightly more precise arguments. Let us start by rewriting it as 
\begin{equation}
\label{eq:etapedemoconvergence}
\sum_{n=0}^{\infty} \frac{1}{\pi } |A|^{2 n} \frac{1}{\gamma(n+1,N)} 1_{\{(z_1,z_2)  \in (\mathcal{B}_N)^2\}} 1_{\{n \le N-1\}}  - \sum_{n=0}^{N-1}\frac{1}{\pi n!} |A|^{2 n} 1_{\{(z_1,z_2)  \in (\mathcal{B}_N)^2\}} ,
\end{equation}
and noticing that $\gamma(n+1,N) \rightarrow n!$ as $N$ tends to infinity. Therefore, in order to conclude, we wish to exhibit a summable bound. To this end, we write
\begin{align*}
\frac{1}{\gamma(n+1,N)}  1_{\{n \le N-1\}}  &\le \frac{1}{\gamma(n+1,n+1)}  \\		&=\frac{1}{n! \, \PP(\sum_{k=1}^{n+1} X_k \le n+1)} \\
	&\sim_{n \rightarrow \infty} \frac{2}{n!}  \\
\end{align*}
where $X_1,\dots,X_n$ are independent exponential random variables of parameter $1$. In the previous calculations, we have used the fact that $\frac{\gamma(a,R)}{\Gamma(a)}$ is the cumulative distribution function of a $\Gamma(a,R)$ random variable, $a > 0$, and $R \ge 0$. The last line results from the application of the central limit theorem to $X_1,\dots, X_n$. Hence, 
\begin{equation*}
\sum_{n=0}^{\infty} \frac{1}{\pi } |A|^{2 n} \frac{1}{\gamma(n+1,N)} 1_{\{(z_1,z_2)  \in (\mathcal{B}_N)^2\}} 1_{\{n \le N-1\}}  \le \sum_{n=0}^{\infty} \frac{1}{\pi \gamma(n+1,n+1) } |A|^{2 n}  < \infty,
\end{equation*}
which means that by Lebesgue's dominated convergence theorem, \eqref{eq:etapedemoconvergence} tends to zero as $N$ tends to infinity. Therefore, $| K(z_1,z_2) - \tilde{K}_N(z_1,z_2) | \xrightarrow[N \rightarrow \infty]{} 0$ for $z_1, z_2 \in A$. Hence, Proposition~\ref{prop:convergencefaible} allows us to conclude that $\tilde{\mu}^N \xrightarrow[N \rightarrow \infty]{\text{weakly}} \mu$.
\end{proof}

We now return to the problem of simulating the determinantal point process with kernel given by \eqref{eq:truncatedcompact}. As it is a projection process, it is efficiently simulated according to the basic algorithm described in Section~\ref{sec:algo}. On the other hand, the time-consuming step of generating the Bernoulli random variables is not necessary anymore, as we are working conditionally on there being $N$ points. Lastly, the method described in this section yields a determinantal point process on $\mathcal{B}_{\sqrt{N}}$. As before, in order to simulate on $\mathcal{B}_a$, we need to apply a homothetic transformation to the $N$ points, which translates to a homothety on the eigenvectors. To sum up, the simulation algorithm of the truncated Ginibre process on a centered ball of radius $a \ge 0$ is as follows:
\begin{algorithm}[h]
\caption{Simulation of the truncated Ginibre process on a compact}
 \begin{algorithmic}[H]
 \label{algo:simutruncated}
   \STATE {\bf define } $\phi_k(z) = \frac{N}{\pi a^2 \gamma(k+1,N)}e^{- \frac{N}{2 a^2} | z |^2} (\frac{N z}{a^2})^k $, for $z \in \mathcal{B}_N$ and $0\le k \le N-1$. \;
   \STATE {\bf define } $\bold v(z) := (\phi_0(z),\dots,\phi_{N-1}(z)  ) $, for $z \in \mathcal{B}_N$. \;
   \STATE {\bf sample } $X_N$ from the distribution with density $p_N(x) = \| \bold v(x) \|^2 /N$, $\ x \in \Lambda$ \;
   \STATE {\bf set } $\bold e_1 = \bold v(X_N) / \| \bold v(X_N) \|$
   \FOR {$i=N-1 \to 1$}
   \STATE {\bf sample } $X_i$ from the distribution with density \; 
   \begin{equation*}
   p_i(x) = \frac{1}{i} \Big[ \| \bold v(x) \|^2 - \sum_{j = 1}^{N-i} | \bold e_j^* \bold v(x) |^2 \Big]
   \end{equation*}
   \STATE {\bf set } $\bold w_i = \bold v(X_i) - \sum_{j=1}^{N-i} \left(\bold e_j^* \bold v(X_i)\right) \bold e_j,\quad \bold e_{N-i+1} = \bold w_i / \| \bold w_i \|$
   \ENDFOR
   \RETURN $(X_1,\dots,X_N)$
 \end{algorithmic}
\end{algorithm}

\noindent
The resulting process is a determinantal point process of kernel \eqref{eq:truncatedcompact}. Its support is on the compact $\mathcal{B}_a$ and has $N$ points almost surely. We now give a brief example of the results of the algorithm applied for $a = 2$ and $N = 9$ at steps $i=8$, $i=5$, and $i=2$ respectively. We have plotted the densities used for the simulation of the next point. We note here that the density is now supported on $\mathcal{B}_a$, whereas before the density was decreasing to zero outside of $\mathcal{B}_a$.

\begin{figure}[!htb]
\minipage{0.33\textwidth}
  \includegraphics[width=\linewidth]{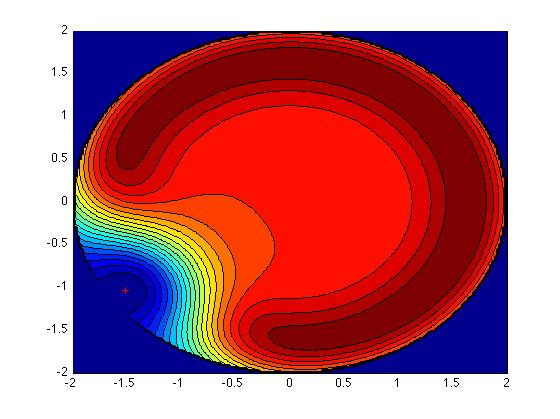}
\endminipage\hfill
\minipage{0.33\textwidth}
  \includegraphics[width=\linewidth]{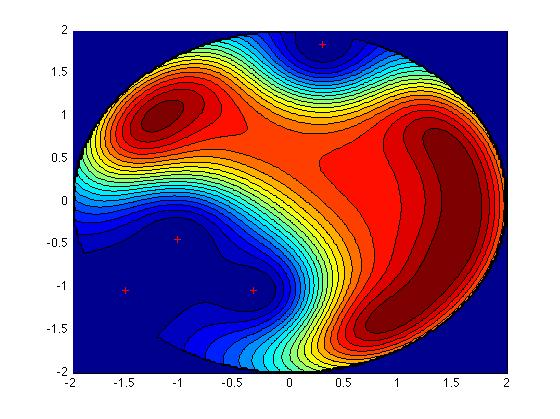}
\endminipage\hfill
\minipage{0.33\textwidth}%
  \includegraphics[width=\linewidth]{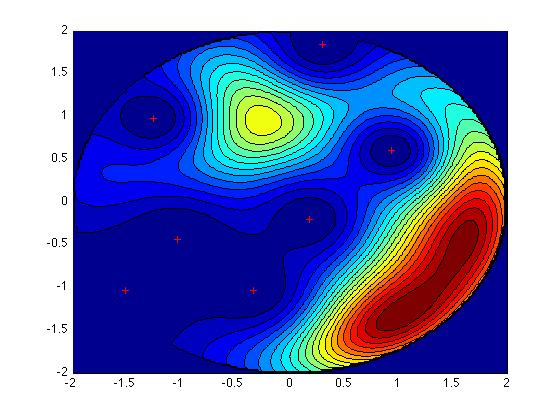}\label{fig:result2}
\endminipage
\end{figure}

\noindent
This determinantal point process presents the advantage of being easy to use in simulations, as well as having $N$ points almost surely. Moreover, Theorem~\ref{thm:convergence} proves its convergence to the Ginibre point process as $N$ tends to infinity.


\end{document}